\newcommand{\bea}{\begin{eqnarray}}
\newcommand{\eea}{\end{eqnarray}}
\def\textmatrix#1&#2\\#3&#4\\{\bigl({#1 \atop #3}\ {#2 \atop #4}\bigr)}
\def\dispmatrix#1&#2\\#3&#4\\{\left({#1 \atop #3}\ {#2 \atop #4}\right)}
\newcommand{\be}{\begin{equation}}
\newcommand{\ee}{\end{equation}}
\newcommand{\ben}{\begin{eqnarray*}}
\newcommand{\een}{\end{eqnarray*}}
\newcommand{\NI}{\noindent}
\newcommand{\bi}{\begin{itemize}}
\newcommand{\ei}{\end{itemize}}
\newtheorem{Theorem}{\sc Theorem}[section]
\newtheorem{Lemma}[Theorem]{\sc Lemma}
\newtheorem{Proposition}[Theorem]{\sc Proposition}
\theoremstyle{definition}
\newtheorem{Remark}[Theorem]{\sc Remark}
\theoremstyle{plain}
\newtheorem{thm}{Theorem}[section]
\newtheorem{cor}[thm]{Corollary}
\theoremstyle{definition}
\numberwithin{equation}{section}
\let\phi=\varphi
\begin{document}

\title[On the improvements of Hardy and Copson inequalities]
{On the improvements of Hardy and Copson inequalities}
%
\author[Das]{Bikram Das}
\address{Dr. APJ Abdul Kalam Technical University, Lucknow-226021 \&
Indian Institute of Carpet Technology, Chauri Road, Bhadohi- 221401, Uttar Pradesh, India}
\email{dasb23113@gmail.com}
\author[Manna]{Atanu Manna$^*$ }
\address{Indian Institute of Carpet Technology, Bhadohi-221401, Uttar Pradesh, India}
\email{atanu.manna@iict.ac.in, atanuiitkgp86@gmail.com ($^*$Corresponding author)}

\subjclass[2010]{Primary 26D15; Secondary 46A45.}
\keywords{ Discrete Hardy's inequality; Improvement; Copson's inequality; Sequence space.}

\begin{abstract}
In this current work, we revisit the recent improvements of the discrete Hardy's inequality in one dimension and establish an extended improved discrete Hardy's inequality with its optimality. We also study one-dimensional discrete Copson's inequality (E.T. Copson, \emph{Notes on a series of positive terms}, J. London Math. Soc., 2 (1927), 9-12.), and achieve an improvement of the same in a particular case. Further, we study some fundamental structures such as completeness, K\"{o}the-Toeplitz duality, separability, etc. of the sequence spaces which originated from the improved discrete Hardy and Copson inequalities in one dimension.
\end{abstract}
\maketitle

\section{Introduction}{\label{secint}}
The famous discrete Hardy's inequality was developed in the twentieth century during the period $1906-1928$. Apart from the contribution of G. H. Hardy, the other mathematicians such as E. Landau, G. P\'{o}lya, I. Schur, M. Riesz contributed a lot to the development of it. A systematic survey on the prehistory of Hardy's inequality is well-explained by Kufner et al. \cite{AKR}. Let us recall the one-dimensional discrete Hardy's inequality in its crudest form. For $p>1$, and a sequence $\{a_{n}\}_{n=1}^{\infty}$ of complex numbers the classical discrete Hardy's inequality (\cite{GHYLITTLE}, Theorem 326) in one dimension states that
\begin{align}{\label{DHI}}
&\displaystyle\sum_{n=1}^{\infty}\Big|\frac{1}{n}\sum_{k=1}^{n}a_{k}\Big|^{p}<\Big(\frac{p}{p-1}\Big)^{p}\displaystyle\sum_{n=1}^{\infty}|a_{n}|^{p},
\end{align} holds unless $a_n$ is null. Here $p>1$ is a real no., and the constant term $\big(\frac{p}{p-1}\big)^{p}$ associated with the inequality (\ref{DHI}) is sharp. In a letter, Landau (see \cite{ELU}) wrote to G.H. Hardy, and addressed a proof of the inequality (\ref{DHI}) with a sharp constant. The results of Landau were officially published five years later than the letter of Landau to Schur \cite{LANDAU}. During this period Hardy \cite{GHYNOTE} was working with both continuous as well as discrete cases of Hardy's inequality, and later he also commented on Landau's letter. Since both E. Landau and G. H. Hardy have contributed to the development of the inequality (\ref{DHI}), so this inequality (\ref{DHI}) sometimes called as Hardy-Landau inequality \cite{AKR}.\\
The 1925 articles of G. H. Hardy (\cite{GHYNOTE251}, \cite{GHYNOTE252}) contain many interesting results. One of them is the following extension of Hardy's inequality (\ref{DHI}). Suppose that $\{q_n\}$ sequences of real numbers such that $q_{n}>0$, and denote $A_{n}=q_{1}a_{1}+ q_{2}a_{2}+ \ldots +q_{n}a_{n}$ and $Q_{n}=q_1+q_2+\ldots+q_{n}$ for $n\in \mathbb{N}$. If for $p>1$, $\{q_{n}^{\frac{1}{p}}a_{n}\}\in \ell_p$ then
\begin{align}{\label{GDHI}}
\displaystyle\sum_{n=1}^{\infty}q_nQ_{n}^{-p}|A_{n}|^{p}&< \Big(\frac{p}{p-1}\Big)^{p}\displaystyle\sum_{n=1}^{\infty}q_{n}|a_{n}|^{p},
\end{align} unless all $a_n$ is null. Also the constant term $\big(\frac{p}{p-1}\big)^{p}$ is sharp. The inequality (\ref{GDHI}) is studied and extended in various ways. For instance, E. T. Copson \cite{ECN} introduced and studied an extended version of inequality (\ref{GDHI}) as below. Let $1<c\leq p$. Then
\begin{align}{\label{COPI}}
&\displaystyle\sum_{n=1}^{\infty}q_{n}Q^{-c}_{n}|A_{n}|^{p}\leq\Big(\frac{p}{c-1}\Big)^{p}\displaystyle\sum_{n=1}^{\infty}q_{n}Q^{p-c}_{n}|a_{n}|^{p},
\end{align}where the associated constant term is best possible, equality holds good when all $a_n$ are `$0$'. There are many applications of these inequalities (\ref{DHI}), (\ref{GDHI}), and (\ref{COPI}) can be found in several parts of analysis in the form of generalization, extensions or their direct applications to spectral theory, graph theory, differential equations etc.. A careful study on the analysis and geometry of Hardy's inequality can be found in \cite{BEL}. For various studies on the Hardy inequalities, we refer to \cite{MBN}, \cite{GHYLITTLE}, \cite{KAPLAP}, \cite{LEINGEN}, \cite{PLEFEVRE}, \cite{LIUZ}, \cite{MANNANEW}, and references cited therein for the readers. One of the recent trends of research in operator theory is to improve various inequalities for operators on Hilbert space. Many authors are engaging themselves in investigating improvements of the well-known numerical radius and Berezin number inequalities for bounded linear operators on a Hilbert space $\mathcal{H}$ (see \cite{MAJEE} for the latest research). But as far as the discrete Hardy's inequality is concerned, we couldn't find any point-wise improvement 
in previous years. In 2018, we have a surprising result on point-wise improvement of the discrete classical Hardy's inequality (\ref{DHI}), and it is due to Keller et al. \cite{MKR}. To write briefly their results, we rewrite the Hardy inequality (\ref{DHI}) in a different form, and use the following notation.\\
Denote $C_{c}(\mathbb{N}_0)$ as the space of finitely supported functions on $\mathbb{N}_0=\{0, 1, 2, 3, \ldots\}$. Then clearly for all $A=(A_n)\in C_{c}(\mathbb{N}_0)$, where $A_n=a_1+a_2+\ldots+a_{n}$ with the assumption that $A_0=0$, inequality (\ref{DHI}) is equivalent to the following:
\begin{align}{\label{EFDHI}}
&\displaystyle\sum_{n=1}^{\infty}|A_n-A_{n-1}|^{p}\geq\Big(\frac{p-1}{p}\Big)^{p}\displaystyle\sum_{n=1}^{\infty}\frac{|A_n|^{p}}{n^{p}}.
\end{align}
Recently, Keller et al. \cite{MKR} improved the inequality (\ref{EFDHI}) for the case when $p=2$, and later Fischer et al. \cite{FFR} obtained the improvement of (\ref{EFDHI}) for general $p>1$. In fact the authors in \cite{MKR} have proved that
\begin{align}{\label{IMHI}}
\displaystyle\sum_{n=1}^{\infty}| A_n-A_{n-1}|^{2}&\geq\frac{1}{4}\displaystyle\sum_{n=1}^{\infty} \frac{| A_n|^{2}}{n^{2}}+\displaystyle\sum_{k=2}^{\infty}\binom{4k}{2k}\frac{1}{(4k-1)2^{4k-1}}\displaystyle\sum_{n=2}^{\infty}\frac{|A_n|^{2}}{n^{2k}}>\frac{1}{4}\displaystyle
\sum_{n=1}^{\infty} \frac{|A_n|^{2}}{n^{2}},
\end{align}and which is equivalent to the following
\begin{align}{\label{IMHI2}}
\displaystyle\sum_{n=1}^{\infty}| A_n-A_{n-1}|^{2}&\geq\displaystyle\sum_{n=1}^{\infty} w_n|A_n|^{2}>\frac{1}{4}\displaystyle
\sum_{n=1}^{\infty} \frac{|A_n|^{2}}{n^{2}},
\end{align}where the sequence $w_n$ is defined as follows:
\begin{center}
$w_n=2-\sqrt{1-\frac{1}{n}}-\sqrt{1+\frac{1}{n}}>\frac{1}{4n^2}$, $n\in \mathbb{N}$.
\end{center}
The history of defining such kind of sequence is also very surprising and pretty interesting. The readers are referred to the works of Keller et al. (\cite{MKR}, \cite{MKRGRAPH}) for getting a history. In a recent work of Krej\v{c}i\v{r}\'{i}k and \v{S}tampach \cite{DKK}, a short and elementary proof of inequality (\ref{IMHI}) is presented by proving an identity, which finally contributes a remainder term. The optimality of the sequence $w_n$ is also established by these authors. In a more latest result of Krej\v{c}i\v{r}\'{i}k et al. (see Theorem 10, \cite{DKALFS}), the authors established that
\begin{align}{\label{IMHIGN3}}
\displaystyle\sum_{n=1}^{\infty} w_n(g)|A_n|^{2}&\leq \displaystyle\sum_{n=1}^{\infty}| A_n-A_{n-1}|^{2},
\end{align}
which is an extension of inequality (\ref{IMHI2}), and the sequence $w_n(g)$ is defined as below
\begin{center}
$w_n(g)=2-\frac{g_{n-1}}{g_n}-\frac{g_{n+1}}{g_n}$, $n\in \mathbb{N}$,
\end{center}where $g_n$ is a strictly positive sequence of real numbers with the convention that $g_0=0$. In particular, when $g_n=\sqrt{n}$ one can easily have the inequality (\ref{IMHI2}). The criteria for optimality of $w_n(g)$ was also considered by these researchers \cite{DKALFS}. Using the idea of defining $w_n(g)$, an improvement of Rellich inequality was obtained by Gerhat et al. \cite{BGDKFS}, which is an unpublished preprint now. \\
Observe that, if we substitute $p=2$, $\alpha=2-c$ and $q_n=1$ for all $n\in \mathbb{N}$ in (\ref{COPI}), then it transformed into the following:
\begin{align*}
\displaystyle\sum_{n=1}^{\infty}{n}^{\alpha-2}|A_{n}|^{2}&\leq\frac{4}{(\alpha-1)^{2}}\displaystyle\sum_{n=1}^{\infty}{n}^{\alpha}|a_{n}|^{2},
\end{align*}which further equivalent to the following inequality:
\begin{align}{\label{COPIPAR}}
\displaystyle\sum_{n=1}^{\infty}|A_{n}-A_{n-1}|^{2}{n}^{\alpha}&\geq \frac{(\alpha-1)^2}{4}\sum_{n=1}^{\infty}\frac{|A_{n}|^{2}}{n^2}{n}^{\alpha},
\end{align}where it is pre-assumed that $A_0=0$. A very surprising and noticeable point is that Gupta \cite{SGA} has recently investigated the improvement of inequality (\ref{COPIPAR}), which is called one-dimensional discrete Hardy's inequality with power weights. Indeed Gupta \cite{SGA} obtained the following inequality:
\begin{align}{\label{GUPI}}
\displaystyle\sum_{n=1}^{\infty}|A_{n}-A_{n-1}|^{2}n^{\alpha} &\geq \displaystyle\sum_{n=1}^{\infty}w_{n}(\alpha,\beta)|A_{n}|^{2},
\end{align}
where $\alpha, \beta\in \mathbb{R}$, $w_{1}(\alpha,\beta):=1+2^{\alpha}-2^{\alpha+\beta}$, and for $ n\geq2$
\begin{align*}
w_{n}(\alpha,\beta)& =n^{\alpha}\Big[1+\Big(1+\frac{1}{n}\Big)^{\alpha}-\Big(1-\frac{1}{n}\Big)^{\beta}-\Big(1+\frac{1}{n}\Big)^{\alpha+\beta}\Big].
\end{align*}
The importance of inequality (\ref{GUPI}) is of two types, one is it contains Hardy's inequality (\ref{COPIPAR}) with power weights whenever $\alpha\in [0, 1)\cup[5,\infty)$, and another is the improvement of (\ref{COPIPAR}) for the case when $\alpha\in [\frac{1}{3}, 1)\cup \{0\}$.
\begin{Remark}
It is to be noted that in the case when $\alpha=0$, and $\beta=\frac{1}{2}$, then inequality (\ref{GUPI}) transformed into the improved discrete Hardy's inequality (\ref{IMHI}) derived by Keller et al. \cite{MKR}.
\end{Remark}
With the above discussions, one may ask the following
\begin{center}
\emph{(a) Is it possible to extend both the inequalities (\ref{IMHIGN3}), and (\ref{GUPI}) for more general weight sequence, say $\{\lambda_n\}$?}
\end{center}
\vspace{0.2cm}
Now let us choose another substitutions $p=2$, $\alpha=2-c$, and $q_{n}=n$ for all $n\in \mathbb{N}$ in inequality (\ref{COPI}), we have
\begin{align*}
\displaystyle \sum_{n=1}^{\infty}S_n^{\alpha}\frac{|A_{n}-A_{n-1}|^{2}}{n} &\geq \frac{(\alpha-1)^{2}}{4}\displaystyle \sum_{n=1}^{\infty}\frac{n}{S_n^{2-\alpha}}|A_{n}|^{2},
\end{align*}or equivalently
\begin{align}{\label{COPIQNEQN}}
\displaystyle \sum_{n=1}^{\infty}S_n^{2-c}\frac{|A_{n}-A_{n-1}|^{2}}{n} &\geq \frac{(c-1)^{2}}{4}\displaystyle \sum_{n=1}^{\infty}\frac{n}{S_n^{c}}|A_{n}|^{2},
\end{align} where $S_n=\frac{n(n+1)}{2}$.\\
We then have two natural questions as below:
\begin{center}
\emph{(b) Is it possible to improve inequality (\ref{COPIQNEQN})?},
\emph{and if the answer is affirmative then} \\
\emph{(c) What are values of $c$ $(1<c\leq 2)$ for which improvement of (\ref{COPIQNEQN}) possible?}
\end{center}
Therefore, the aim of this present note is two folds. In one fold of this paper, we will answer question \emph{(a)} posed above, and establish a generalized improved discrete Hardy's inequality in one dimension. This extension not only includes the inequality (\ref{IMHI}) of Keller et al. \cite{MKR} but also contains the recent inequality (\ref{IMHIGN3}) of Krej\v{c}i\v{r}\'{i}k et al. (see Theorem 10, \cite{DKALFS}) as well as the inequality (\ref{GUPI}) of Gupta \cite{SGA}. In another fold, we will be concentrated on the answers to questions \emph{(b)}, and \emph{(c)}. We prove that improvement of Copson's inequality (\ref{COPIQNEQN}) is possible, and moreover, we can prove that it is true only when $c=\frac{3}{2}$.\\
The paper is organized as follows, in section 2 we prove an extended improved one-dimensional Hardy's inequality, and derive several consequences. In section 3, we will be dealing with Copson's inequality and its possible improvements and optimality of the weight sequence. In addition to these results, section 4, provides some fundamental properties of the sequence space $\Gamma_p$, $p>1$ which is created from the improved Hardy inequality (\ref{IMHI2}), and Copson inequality (\ref{IMCOP}).

\section{Generalization of Improved discrete Hardy's inequality}
We now proceed to present one of the main results of this paper. First, we begin with the following theorem.
\begin{thm}{\label{THMIDHI}}
Let $A_n$ be any sequence of real or complex numbers such that $A_n\in C_c(\mathbb{N}_0)$ with $A_{0}=0$ and $g=\{g_{n}\}_{n=1}^{\infty}$ be any strictly positive sequence of real numbers. Then the following inequality holds:
\begin{align}{\label{GIMHI}}
\displaystyle\sum_{n=1}^{\infty}w_{n}(\lambda,g )|A_n|^{2}\leq\displaystyle\sum_{n=1}^{\infty}\frac{|A_{n}-A_{n-1}|^{2}}{\lambda_{n}},
\end{align} where $\lambda=\{\lambda_n\}_{n\geq 1}$ such that $\lambda_{n}>0$, $n\in \mathbb{N}$, and the sequence $w_{n}(\lambda,g)$ is defined as below:
\begin{align*}
w_{n}(\lambda,g)& =\frac{1}{\lambda_{n}}+\frac{1}{\lambda_{n+1}}-\frac{g_{n-1}}{\lambda_{n}g_{n}}-\frac{g_{n+1}}{\lambda_{n+1}g_{n}}.
\end{align*}
Further, if there exists a sequence of elements $\gamma^{N}\in C_{c}(\mathbb{N}_{0})$ such that $\gamma^{N}\leq\gamma^{N+1}$ with $\gamma^{N}\rightarrow1$ as $N\rightarrow\infty$ pointwise, and
\begin{align}{\label{LIMCOND}}
\displaystyle\lim_{N\rightarrow\infty}\sum_{n=2}^{\infty}\frac{g_{n}g_{n-1}}{\lambda_{n}}|\gamma^{N}_{n}-\gamma^{N}_{n-1}|^{2}=0,
\end{align}
then $w_{n}(\lambda, g)$ is optimal.
\end{thm}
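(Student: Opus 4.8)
The plan is to deduce both assertions of the theorem from a single exact identity with a manifestly nonnegative remainder term, in the spirit of the proofs of Krej\v{c}i\v{r}\'{i}k and \v{S}tampach \cite{DKK} and of Krej\v{c}i\v{r}\'{i}k et al. \cite{DKALFS}, and then to read off the optimality of $w_n(\lambda,g)$ by testing that identity against the family $A^{N}:=g\,\gamma^{N}$. Throughout one uses the standing convention $g_0=0$ (so that $w_1(\lambda,g)=\lambda_1^{-1}+\lambda_2^{-1}-g_2/(\lambda_2 g_1)$), consistent with $A_0=0$.

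\emph{Step 1 (the identity).} For every $A\in C_c(\mathbb{N}_0)$ all sums below are finite, and the goal is to show
\begin{align*}
\sum_{n=1}^{\infty}\frac{|A_{n}-A_{n-1}|^{2}}{\lambda_{n}}
=\sum_{n=1}^{\infty}w_{n}(\lambda,g)\,|A_{n}|^{2}
+\sum_{n=2}^{\infty}\frac{g_{n-1}}{\lambda_{n}g_{n}}\,\Big|A_{n}-\frac{g_{n}}{g_{n-1}}A_{n-1}\Big|^{2}.
\end{align*}
To get this, first expand $|A_n-A_{n-1}|^2=|A_n|^2-2\,\mathrm{Re}\big(A_n\overline{A_{n-1}}\big)+|A_{n-1}|^2$; reindexing the $|A_{n-1}|^2$-sum by $n\mapsto n+1$ and using $A_0=0$ produces the coefficient $\lambda_n^{-1}+\lambda_{n+1}^{-1}$ in front of $|A_n|^2$. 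In the cross term the $n=1$ summand vanishes (again by $A_0=0$), and for $n\ge2$ one inserts the elementary completion of square
\begin{align*}
2\,\mathrm{Re}\big(A_n\overline{A_{n-1}}\big)=\frac{g_{n-1}}{g_n}|A_n|^2+\frac{g_n}{g_{n-1}}|A_{n-1}|^2-\frac{g_{n-1}}{g_n}\,\Big|A_n-\frac{g_n}{g_{n-1}}A_{n-1}\Big|^2.
\end{align*}
Reindexing the $\frac{g_n}{g_{n-1}}|A_{n-1}|^2$-sum by $n\mapsto n+1$ contributes the term $-g_{n+1}/(\lambda_{n+1}g_n)$, while the $\frac{g_{n-1}}{g_n}|A_n|^2$-sum contributes $-g_{n-1}/(\lambda_n g_n)$ (its $n=1$ summand is $0$ because $g_0=0$). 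Collecting the coefficients of $|A_n|^2$ reproduces exactly $w_n(\lambda,g)$, and what remains is precisely the displayed square sum. Since $\lambda_n>0$ and $g_n>0$, that remainder is nonnegative, which gives \eqref{GIMHI}; only the finiteness of the sums, i.e. $A\in C_c(\mathbb{N}_0)$, is used, so the hypotheses on $\gamma^{N}$ are not needed for the inequality itself.

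\emph{Step 2 (optimality).} Suppose, towards a contradiction, that some sequence $\{\tilde w_n\}$ satisfies $\tilde w_n\ge w_n(\lambda,g)$ for all $n$, $\tilde w_{n_0}>w_{n_0}(\lambda,g)$ for some $n_0$, and $\sum_n \tilde w_n|A_n|^2\le\sum_n\lambda_n^{-1}|A_n-A_{n-1}|^2$ for all $A\in C_c(\mathbb{N}_0)$. Apply this with $A=A^{N}$, where $A^{N}_n:=g_n\gamma^{N}_n$; these lie in $C_c(\mathbb{N}_0)$ and satisfy $A^{N}_0=0$ since $g_0=0$. A direct computation gives $A^{N}_n-\frac{g_n}{g_{n-1}}A^{N}_{n-1}=g_n\big(\gamma^{N}_n-\gamma^{N}_{n-1}\big)$, so that the remainder of the Step~1 identity, evaluated at $A^{N}$, equals exactly $\sum_{n=2}^{\infty}\frac{g_ng_{n-1}}{\lambda_n}\big|\gamma^{N}_n-\gamma^{N}_{n-1}\big|^2$, which tends to $0$ by \eqref{LIMCOND}. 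Hence
\begin{align*}
\lim_{N\to\infty}\left(\sum_{n=1}^{\infty}\frac{|A^{N}_n-A^{N}_{n-1}|^{2}}{\lambda_n}-\sum_{n=1}^{\infty}w_n(\lambda,g)\,|A^{N}_n|^{2}\right)=0.
\end{align*}
On the other hand, since $\gamma^{N}\to1$ pointwise one has $|A^{N}_{n_0}|^2=g_{n_0}^2\,|\gamma^{N}_{n_0}|^2\to g_{n_0}^2>0$, so, using $\tilde w_n\ge w_n(\lambda,g)$ termwise, for all large $N$
\begin{align*}
\sum_{n=1}^{\infty}\big(\tilde w_n-w_n(\lambda,g)\big)|A^{N}_n|^{2}\ \ge\ \big(\tilde w_{n_0}-w_{n_0}(\lambda,g)\big)|A^{N}_{n_0}|^{2}\ \ge\ \tfrac{1}{2}\big(\tilde w_{n_0}-w_{n_0}(\lambda,g)\big)g_{n_0}^2\ >\ 0.
\end{align*}
Subtracting the last two displays, for $N$ large one gets $\sum_n\lambda_n^{-1}|A^{N}_n-A^{N}_{n-1}|^2<\sum_n\tilde w_n|A^{N}_n|^2$, contradicting the assumed inequality. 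Therefore $w_n(\lambda,g)$ cannot be enlarged at any index, i.e. it is optimal.

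The main obstacle is the bookkeeping in Step~1: carrying out the two index shifts and checking that the conventions $A_0=0$ and $g_0=0$ make the boundary ($n=1$) terms disappear, so that the coefficients collapse onto $w_n(\lambda,g)$ verbatim. In Step~2 the decisive point is merely to notice that the remainder evaluated on $A^{N}=g\,\gamma^{N}$ is \emph{exactly} the left-hand side of \eqref{LIMCOND}, so the hypothesis applies with no slack; the monotonicity $\gamma^{N}\le\gamma^{N+1}$ plays only the auxiliary role of ensuring $\gamma^{N}\le1$ and the pointwise limit, while \eqref{LIMCOND} is the genuinely substantive assumption.
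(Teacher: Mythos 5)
Your proposal is correct and follows essentially the same route as the paper: the paper proves the identical sum-of-squares identity (its remainder $\sum_{n\ge2}\lambda_n^{-1}\bigl|\sqrt{g_{n-1}/g_n}\,A_n-\sqrt{g_n/g_{n-1}}\,A_{n-1}\bigr|^2$ is exactly your completed square), and establishes optimality by testing with $A_n=g_n\gamma^N_n$ so that the remainder becomes the left-hand side of \eqref{LIMCOND}. The only cosmetic difference is at the very end, where the paper invokes monotone convergence to conclude $\sum_n(\widetilde{w}_n-w_n)g_n^2=0$ while you derive a contradiction from a single index; both are fine.
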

\begin{proof}
To prove the inequality (\ref{GIMHI}), we first choose $h_{n}=g_{n}-g_{n-1}$ so that $h_{n+1}=g_{n+1}-g_{n}$. Then $w_{n}(\lambda, g)$ can be re-written as follows:
\begin{center}
$w_{n}(\lambda,g)=\frac{h_{n}}{\lambda_{n}g_{n}}-\frac{h_{n+1}}{\lambda_{n+1}g_{n}}$.
\end{center}
We observe that
\begin{align*}
\displaystyle\sum_{n=1}^{\infty}w_{n}(\lambda,g)|A_{n}|^{2}
& =\displaystyle\sum_{n=1}^{\infty}\Big(\frac{h_{n}}{\lambda_{n}g_{n}}-\frac{h_{n+1}}{\lambda_{n+1}g_{n}}\Big)|A_{n}|^{2}\\
&=\displaystyle\sum_{n=1}^{\infty}\frac{h_{n}}{\lambda_{n}g_{n}}|A_{n}|^{2}-\displaystyle\sum_{n=2}^{\infty}\frac{h_{n}}{\lambda_{n}g_{n-1}}|A_{n-1}|^{2} \\
&=\displaystyle\sum_{n=1}^{\infty}\frac{h_{n}}{\lambda_{n}}\Big(\frac{|A_{n}|^{2}}{g_{n}}-\frac{|A_{n-1}|^{2}}{g_{n-1}}\Big),
\end{align*}where it is assumed that the term $\frac{|A_0|^2}{\lambda_{1}g_{0}}$ as zero. Further by assuming the term $\sqrt{\frac{g_{1}}{g_{0}}}\frac{A_{0}}{\sqrt\lambda_{1}}$ as zero, the following computation gives
\begin{align*}
&\displaystyle\sum_{n=1}^{\infty}\frac{|A_{n}-A_{n-1}|^{2}}{\lambda_{n}}-\displaystyle\sum_{n=1}^{\infty}w_{n}(\lambda,g)|A_n|^{2}\\
& =\displaystyle\sum_{n=1}^{\infty}\Big[\Big|\frac{A_{n}}{\sqrt\lambda_{n}}-\frac{A_{n-1}}{\sqrt\lambda_{n}}\Big|^{2}-\frac{h_{n}}{\lambda_{n}}\Big(\frac{|A_{n}|^{2}}{g_{n}}-\frac {|A_{n-1}|^{2}}{g_{n-1}}\Big)\Big]\\
& =\displaystyle\sum_{n=1}^{\infty}\Big[\frac{|A_{n}|^{2}}{\lambda_{n}}+\frac{|A_{n-1}|^{2}}{\lambda_{n}}-\frac{2\mathfrak{R}(\bar{A}_{n}A_{n-1})}{\lambda_{n}}-\frac{h_{n}}
{\lambda_{n}}\Big(\frac{|A_{n}|^{2}}{g_{n}}-\frac {|A_{n-1}|^{2}}{g_{n-1}}\Big)\Big]\\
&=\displaystyle\sum_{n=1}^{\infty}\Big[\Big(1-\frac{h_{n}}{g_{n}}\Big)\frac{|A_{n}|^{2}}{\lambda_{n}}+\Big(1+\frac {h_{n}}{g_{n-1}}\Big)\frac{|A_{n-1}|^{2}}{\lambda_{n}}-\frac{2\mathfrak{R}(\bar{A}_{n}A_{n-1})}{\lambda_{n}}\Big]
\end{align*}
\begin{align*}
&=\displaystyle\sum_{n=2}^{\infty}\Big|\sqrt{(1-\frac{h_{n}}{g_{n}}})\frac{A_{n}}{\sqrt{\lambda_{n}}}-\sqrt{(1+\frac{h_{n}}{g_{n-1}})}\frac{A_{n-1}}
{\sqrt\lambda_{n}}\Big|^{2}\geq0.
\end{align*}
Therefore, the above discussion gives the following identity
\begin{align}{\label{THM1IDN}}
&\displaystyle\sum_{n=2}^{\infty}\Big|\sqrt{(1-\frac{h_{n}}{g_{n}})}\frac{A_{n}}{\sqrt{\lambda_{n}}}-\sqrt{(1+\frac{h_{n}}{g_{n-1}})}
\frac{A_{n-1}}{\sqrt{\lambda_{n}}}\Big|^{2}+\displaystyle\sum_{n=1}^{\infty}\frac{h_{n}}{\lambda_{n}}\Big(\frac{|A_{n}|^{2}}{g_{n}}-\frac {|A_{n-1}|^{2}}{g_{n-1}}\Big) \nonumber\\
&=\displaystyle\sum_{n=1}^{\infty}\Big|\frac{A_{n}}{\sqrt\lambda_{n}}-\frac{A_{n-1}}{\sqrt\lambda_{n}}\Big|^{2},
\end{align}
which proves the desired inequality (\ref{GIMHI}). This completes the proof of the theorem.\\
We now prove the optimality of the sequence $w_n$. Let $\widetilde{w}_{n}$ be a sequence satisfying $\widetilde{w}_{n}\geq w_{n}$. Then using the above identity (\ref{THM1IDN}), we get
\begin{align}{\label{THM1OPTI}}
0& \leq\displaystyle\sum_{n=1}^{\infty}(\widetilde{w}_{n}-w_{n})|A_{n}|^{2}\leq\displaystyle\sum_{n=2}^{\infty}\Big|\sqrt{ \frac{g_{n-1}}{g_{n}}}\frac{A_{n}}{\sqrt\lambda_{n}}-\sqrt{\frac{g_{n}}{g_{n-1}}}\frac{A_{n-1}}{\sqrt\lambda_{n}}\Big|^{2}
\end{align}
Now choose a sequence $A_{n}=g_{n}\gamma^{N}_{n}$ in above inequality (\ref{THM1OPTI}), where $\gamma^{N}\in C_{c}(\mathbb{N}_{0})$ satisfying the given assumptions,
we get
\begin{center}
$0\leq\displaystyle\sum_{n=1}^{\infty}(\widetilde{w}_{n}-w_{n})|g_{n}\gamma^{N}_{n}|^{2}\leq\displaystyle\sum_{n=2}^{\infty}
\frac{g_{n}g_{n-1}}{\lambda_{n}}|\gamma^{N}_{n}-\gamma^{N}_{n-1}|^{2}$.
\end{center}
Since the sequence $\{g_n\}$ is strictly positive, $\gamma^{N}\leq\gamma^{N+1}$ and $\gamma^{N}\rightarrow1$ as $N\rightarrow\infty$ pointwise, so by monotone convergence theorem and (\ref{LIMCOND}), we obtain
\begin{center}
$\displaystyle\sum_{n=1}^{\infty}(\widetilde{w}_{n}-w_{n})g^{2}_{n}=0$.
\end{center}
Hence $\widetilde{w}_{n}=w_{n}$, which establishes the optimality of $w_{n}$.
\end{proof}

An immediate consequence of the Theorem \ref{THMIDHI} is the recent result of Krej\v{c}i\v{r}\'{i}k et al. (see Theorem 10, \cite{DKALFS})
\begin{cor}(Theorem 10, \cite{DKALFS})
If one chooses $\lambda_{n}=1$, $n\in \mathbb{N}$, and $\{g_n\}$ is a positive sequence of real numbers then
\begin{center}
$\displaystyle\sum_{n=1}^{\infty}w_{n}(g)|A_{n}|^2\leq\displaystyle\sum_{n=1}^{\infty}|A_{n}-A_{n-1}|^{2}$,
\end{center}
where the sequence $w_{n}=2-\frac{g_{n-1}}{g_{n}}-\frac{g_{n+1}}{g_{n}}$ is optimal if
\begin{align*}
\displaystyle\lim_{N\rightarrow\infty}\sum_{n=2}^{\infty}g_{n}g_{n-1}|\gamma^{N}_{n}-\gamma^{N}_{n-1}|^{2}=0.
\end{align*}
\end{cor}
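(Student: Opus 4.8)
The plan is to obtain the corollary as a direct specialization of Theorem~\ref{THMIDHI}. First I would set $\lambda_n=1$ for all $n\in\mathbb{N}$ in the statement of Theorem~\ref{THMIDHI}. Then the right-hand side $\sum_{n=1}^\infty \frac{|A_n-A_{n-1}|^2}{\lambda_n}$ collapses to $\sum_{n=1}^\infty |A_n-A_{n-1}|^2$, and the weight
\begin{align*}
w_n(\lambda,g)=\frac{1}{\lambda_n}+\frac{1}{\lambda_{n+1}}-\frac{g_{n-1}}{\lambda_n g_n}-\frac{g_{n+1}}{\lambda_{n+1}g_n}
\end{align*}
becomes $2-\frac{g_{n-1}}{g_n}-\frac{g_{n+1}}{g_n}$, which is precisely $w_n(g)$. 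So the inequality $\sum_{n=1}^\infty w_n(g)|A_n|^2\le \sum_{n=1}^\infty |A_n-A_{n-1}|^2$ follows immediately from \eqref{GIMHI}.

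Second, for the optimality claim I would invoke the second half of Theorem~\ref{THMIDHI}. With $\lambda_n=1$, the hypothesis \eqref{LIMCOND} reads $\lim_{N\to\infty}\sum_{n=2}^\infty g_n g_{n-1}|\gamma^N_n-\gamma^N_{n-1}|^2=0$, which is exactly the condition stated in the corollary. Hence the existence of a monotone sequence $\gamma^N\in C_c(\mathbb{N}_0)$ with $\gamma^N\to 1$ pointwise satisfying this limit condition guarantees, by the theorem, that $w_n(g)$ is optimal in the sense that any $\widetilde w_n\ge w_n$ for which the inequality persists must coincide with $w_n(g)$.

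Since every ingredient is already established in Theorem~\ref{THMIDHI}, the only thing to verify carefully is that the substitution $\lambda_n\equiv 1$ is admissible (it is, as $\lambda_n=1>0$) and that the resulting expressions match the classical form of $w_n(g)$ appearing in \cite{DKALFS}. There is no genuine obstacle here; the entire content of the corollary is bookkeeping once Theorem~\ref{THMIDHI} is in hand. If one wished to make the argument fully self-contained, one could alternatively re-derive the identity \eqref{THM1IDN} directly with $\lambda_n=1$, but this merely reproduces a special case of the computation already carried out, so I would simply cite the theorem.
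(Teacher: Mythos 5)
Your proposal is correct and matches the paper's treatment exactly: the paper presents this corollary as an immediate consequence of Theorem \ref{THMIDHI}, obtained by the substitution $\lambda_n\equiv 1$, under which the weight $w_n(\lambda,g)$ and the condition \eqref{LIMCOND} reduce to precisely the expressions in the statement. Nothing further is needed.
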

The Theorem \ref{THMIDHI} also contains the inequality derived by Gupta \cite{SGA} as follows.
\begin{cor}(Gupta \cite{SGA})
If one chooses $\lambda_{n}=\frac{1}{n^{\alpha}}$, $\alpha\in \mathbb{R}$, and $g_n=n^{\beta}$ then
\begin{center}
$\displaystyle\sum_{n=1}^{\infty}w_{n}(\alpha,\beta)|A_{n}|^{2}$$\leq\displaystyle\sum_{n=1}^{\infty}|A_{n}-A_{n-1}|^{2}n^{\alpha}$,
\end{center}
where $\beta\in \mathbb{R}$ and
\begin{align*}
w_{n}(\alpha,\beta)&=n^{\alpha}\Big[1+(1+\frac{1}{n})^{\alpha}-(1-\frac{1}{n})^{\beta}-(1+\frac{1}{n})^{\alpha+\beta}\Big].
\end{align*}
\end{cor}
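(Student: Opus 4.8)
The plan is to obtain this corollary as the special case of Theorem~\ref{THMIDHI} corresponding to the choices $\lambda_n = n^{-\alpha}$ and $g_n = n^{\beta}$. First I would check that these sequences meet the hypotheses of the inequality part of Theorem~\ref{THMIDHI}: for any real $\alpha$ one has $\lambda_n = n^{-\alpha} > 0$ for every $n \in \mathbb{N}$, and for any real $\beta$ the sequence $g_n = n^{\beta}$ is strictly positive, with the standing convention $g_0 = 0$ still in force. Hence inequality~(\ref{GIMHI}) is available with these data, and the whole task reduces to identifying the two sides of~(\ref{GIMHI}) with the two sides of the asserted inequality.

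Next I would simplify both sides. Since $1/\lambda_n = n^{\alpha}$, the right-hand side of~(\ref{GIMHI}) becomes $\sum_{n=1}^{\infty} n^{\alpha}|A_n - A_{n-1}|^2$, exactly as wanted. For the weight, I would substitute into
\begin{equation*}
w_n(\lambda,g) = \frac{1}{\lambda_n} + \frac{1}{\lambda_{n+1}} - \frac{g_{n-1}}{\lambda_n g_n} - \frac{g_{n+1}}{\lambda_{n+1} g_n}
\end{equation*}
the identities $1/\lambda_n = n^{\alpha}$, $1/\lambda_{n+1} = (n+1)^{\alpha}$, $g_{n-1}/g_n = (1-\tfrac{1}{n})^{\beta}$ and $g_{n+1}/g_n = (1+\tfrac{1}{n})^{\beta}$, valid for $n \geq 2$. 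This gives $w_n(\lambda,g) = n^{\alpha} + (n+1)^{\alpha} - n^{\alpha}(1-\tfrac{1}{n})^{\beta} - (n+1)^{\alpha}(1+\tfrac{1}{n})^{\beta}$, and factoring out $n^{\alpha}$ while using $(n+1)^{\alpha}/n^{\alpha} = (1+\tfrac{1}{n})^{\alpha}$ turns this into
\begin{equation*}
w_n(\alpha,\beta) = n^{\alpha}\Big[\, 1 + (1+\tfrac{1}{n})^{\alpha} - (1-\tfrac{1}{n})^{\beta} - (1+\tfrac{1}{n})^{\alpha+\beta} \,\Big],
\end{equation*}
which is precisely the stated weight.

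Finally I would dispose of the endpoint $n = 1$ separately. Because of the convention $g_0 = 0$, the summand $g_{n-1}/(\lambda_n g_n)$ vanishes at $n = 1$, so that $w_1(\lambda,g) = 1 + 2^{\alpha} - 2^{\alpha+\beta}$, in agreement with the displayed formula once the factor $(1-\tfrac{1}{n})^{\beta}$ is read at $n=1$ under the same convention (this matches Gupta's own convention $w_1(\alpha,\beta) = 1 + 2^{\alpha} - 2^{\alpha+\beta}$). I do not expect any genuine obstacle here: the argument is a direct specialization of Theorem~\ref{THMIDHI}, and the only points requiring attention are the bookkeeping at $n=1$ and the algebraic factorization of $w_n(\lambda,g)$. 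With those in hand, the corollary is immediate.
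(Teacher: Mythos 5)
Your proposal is correct and is exactly the route the paper intends: the corollary is obtained by specializing Theorem~\ref{THMIDHI} to $\lambda_n = n^{-\alpha}$, $g_n = n^{\beta}$, simplifying $w_n(\lambda,g)$ by factoring out $n^{\alpha}$, and treating $n=1$ via the convention $g_0 = 0$ (consistent with Gupta's $w_1(\alpha,\beta) = 1 + 2^{\alpha} - 2^{\alpha+\beta}$). The paper leaves this verification implicit, and your bookkeeping at the endpoint and in the algebraic identification is accurate.
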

\begin{Remark}
We remark that if $w_{n}(\lambda,\beta)$ is expanded in a series then we have
\begin{align*}
&w_{n}(\lambda,\beta)\\
& =\frac{1}{\lambda_{n}}+\frac{1}{\lambda_{n+1}}-\frac{1}
{\lambda_{n}}\Big(1-\frac{1}{n}\Big)^{\beta}-\frac{1}{\lambda_{n+1}}\Big(1+\frac{1}{n}\Big)^{\beta}\\
&=\Big(\frac{1}{\lambda_{n}}-\frac{1}{\lambda_{n+1}}\Big)\frac{\beta}{n}+\frac{\beta(1-\beta)}{2}\frac{1}{n^{2}}\Big(\frac{1}{\lambda_{n}}+\frac{1}{\lambda_{n+1}}\Big)+ \frac{\beta(\beta-1)(\beta-2)}{6}\frac{1}{n^{3}}\Big(\frac{1}{\lambda_{n}}-\frac{1}{\lambda_{n+1}}\Big)\\
& \hspace{0.15cm}
+\frac{\beta(1-\beta)(\beta-2)(\beta-3)}{24}\frac{1}{n^{4}}\Big(\frac{1}{\lambda_{n}}+\frac{1}{\lambda_{n+1}}\Big)+\cdots\cdots
\end{align*}
Hence if $\beta\in(0,1)$ and the sequence $\{\lambda_{n}\}$ is non-decreasing then one gets
\begin{center}
$w_{n}(\lambda,\beta)>\frac{\beta(1-\beta)}{2}\frac{1}{n^{2}}(\frac{1}{\lambda_{n}}+\frac{1}{\lambda_{n+1}})$.
\end{center}
\end{Remark}
Therefore for any $\beta\in(0,1)$ and $\lambda_{n+1}\geq\lambda_{n}$, $n\in\mathbb{N}$ one can asserts the following:
\begin{align}{\label{RBETA}}
\displaystyle\sum_{n=1}^{\infty}\Big(\frac{1}{\lambda_{n}}+\frac{1}{\lambda_{n+1}}\Big)\frac{\beta(1-\beta)}{2}\frac{1}{n^{2}}|A_{n}|^{2}
&<\displaystyle\sum_{n=1}^{\infty}w_{n}(\lambda,\beta) |A_{n}|^{2}\leq\displaystyle\sum_{n=1}^{\infty}\frac{|A_{n}-A_{n-1}|^{2}}{\lambda_{n}}.
\end{align}

\begin{Remark}{\label{RGIMHI}}
In particular, if $\beta=\frac{1}{2}$ is chosen in $w_{n}(\lambda,\beta)$, then we get
$$w_{n}(\lambda, \frac{1}{2})=\frac{1}{\lambda_{n}}+\frac{1}{\lambda_{n+1}}-\frac{1}
{\lambda_{n}}\sqrt{(1-\frac{1}{n})}-\frac{1}{\lambda_{n+1}}\sqrt{(1+\frac{1}{n})}.$$
Moreover, we note that
\begin{center}
$w_{n}(\lambda, \frac{1}{2})>\frac{1}{8n^{2}}(\frac{1}{\lambda_{n}}+\frac{1}{\lambda_{n+1}})$
\end{center}
is true for all non-decreasing sequence $\{\lambda_{n}\}$.
\end{Remark}

\begin{cor}
It is to be noted that for $\lambda_{n}=1$, $n\in \mathbb{N}$ and $\beta=\frac{1}{2}$ then we get the following inequality due to Keller et al. \cite{MKR}:
\begin{center}
$\displaystyle\sum_{n=1}^{\infty}\frac{1}{4n^{2}}|A_{n}|^{2} < \displaystyle\sum_{n=1}^{\infty}w_{n}|A_{n}|^{2} \leq \displaystyle\sum_{n=1}^{\infty}|A_{n}-A_{n-1}|^{2}$
\end{center}
where $ w_{n}\equiv w_{n}(\{1\}, \frac{1}{2})=2-\sqrt{(1-\frac{1}{n})}-\sqrt{(1+\frac{1}{n})}$.
\end{cor}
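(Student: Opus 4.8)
The plan is to read off this Corollary as the single specialization $\lambda_n\equiv 1$, $g_n=\sqrt n$ (that is, $\beta=\tfrac12$) of the results already established, so that essentially no new computation is needed. First I would apply Theorem \ref{THMIDHI} with $\lambda_n=1$ for every $n$ and $g_n=\sqrt n$: the defining formula $w_n(\lambda,g)=\frac{1}{\lambda_n}+\frac{1}{\lambda_{n+1}}-\frac{g_{n-1}}{\lambda_n g_n}-\frac{g_{n+1}}{\lambda_{n+1}g_n}$ then collapses (using $g_0=0$) to $w_n=2-\frac{\sqrt{n-1}}{\sqrt n}-\frac{\sqrt{n+1}}{\sqrt n}=2-\sqrt{1-\tfrac1n}-\sqrt{1+\tfrac1n}$, and inequality \eqref{GIMHI} becomes precisely $\sum_{n=1}^\infty w_n|A_n|^2\le\sum_{n=1}^\infty|A_n-A_{n-1}|^2$, i.e. the right-hand inequality of the claim. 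Equivalently one may invoke the preceding Corollary of Gupta with $\alpha=0$, $\beta=\tfrac12$, which yields the same weight and the same bound.

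For the left-hand inequality I would appeal to Remark \ref{RGIMHI}: since the constant sequence $\{\lambda_n\}\equiv\{1\}$ is non-decreasing, the bound $w_n(\lambda,\tfrac12)>\frac{1}{8n^2}\big(\frac{1}{\lambda_n}+\frac{1}{\lambda_{n+1}}\big)$ recorded there specializes to $w_n>\frac{1}{8n^2}(1+1)=\frac{1}{4n^2}$ for every $n\in\mathbb N$. Multiplying by $|A_n|^2$ and summing gives $\sum_{n=1}^\infty\frac{1}{4n^2}|A_n|^2\le\sum_{n=1}^\infty w_n|A_n|^2$, and this is strict whenever $A\in C_c(\mathbb N_0)$ is not the null sequence, since then $|A_n|^2>0$ for at least one index $n$ while the pointwise inequality $\frac{1}{4n^2}<w_n$ is strict at every $n$.

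If one prefers to avoid citing the Remark, the pointwise inequality $2-\sqrt{1-\tfrac1n}-\sqrt{1+\tfrac1n}>\frac{1}{4n^2}$ can be proved directly: both sides of $2-\frac{1}{4n^2}>\sqrt{1-\tfrac1n}+\sqrt{1+\tfrac1n}$ are positive for $n\ge1$, and squaring (using $(\sqrt{1-x}+\sqrt{1+x})^2=2+2\sqrt{1-x^2}$ with $x=1/n$) reduces the claim to $2-\frac{1}{n^2}+\frac{1}{16n^4}>2\sqrt{1-1/n^2}$; dropping the positive term $\frac{1}{16n^4}$ and squaring once more, this follows from the trivial identity $(2-t)^2-4(1-t)=t^2>0$ with $t=1/n^2$. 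In short there is no genuine obstacle here: the whole content of the Corollary is carried by Theorem \ref{THMIDHI} together with Remark \ref{RGIMHI}, and the only point that needs a little care is recording that the leftmost inequality is strict precisely for non-null $A$.
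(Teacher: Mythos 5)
Your proposal is correct and follows essentially the same route as the paper, which presents this Corollary as the immediate specialization $\lambda_n\equiv 1$, $g_n=\sqrt{n}$ (i.e.\ $\beta=\tfrac12$) of Theorem \ref{THMIDHI} combined with the pointwise lower bound $w_n(\lambda,\tfrac12)>\frac{1}{8n^2}\bigl(\frac{1}{\lambda_n}+\frac{1}{\lambda_{n+1}}\bigr)$ from Remark \ref{RGIMHI}. Your extra direct verification of $2-\sqrt{1-\tfrac1n}-\sqrt{1+\tfrac1n}>\frac{1}{4n^2}$ and your observation that strictness of the leftmost inequality requires $A$ to be non-null are both correct and consistent with the paper's intent.
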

Therefore, from the above discussions, we have the following theorem.
\begin{thm}{\label{THMOPTI}} Let $g_{n}=n^{\beta}$, $\beta\in (0, 1)$ and $\lambda=\{\lambda_{n}\}$ be a non-decreasing sequence of real numbers such that $\lambda_n>0$. Then for any sequence $A=\{A_n\}$  of real or complex numbers with $A\in C_c(\mathbb{N}_0)$ such that $A_{0}=0$ the following inequality holds:
\begin{align}
\displaystyle\sum_{n=1}^{\infty}\Big(\frac{1}{\lambda_{n}}+\frac{1}{\lambda_{n+1}}\Big)\frac{\beta(1-\beta)}{2n^{2}}|A_{n}|^{2}
& < \displaystyle\sum_{n=1}^{\infty}w_{n}(\lambda,\beta)|A_{n}|^{2} \leq \displaystyle\sum_{n=1}^{\infty}\frac{|A_{n}-A_{n-1}|^{2}}{\lambda_{n}}.
\end{align}Also for $\lambda_n\geq 1$, $n\in \mathbb{N}$, and $\beta\in (0,\frac{1}{2}]$ the weight sequence $w_{n}(\lambda, \beta)$ is optimal.
\end{thm}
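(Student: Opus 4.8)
The plan is to obtain the two-sided inequality essentially for free from what is already established, and to spend the real effort on the optimality assertion, which I would deduce from the optimality criterion of Theorem~\ref{THMIDHI} by exhibiting a suitable approximating family.

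For the inequality itself, specializing $g_n=n^\beta$ in Theorem~\ref{THMIDHI} turns $g_{n-1}/g_n$ into $(1-1/n)^\beta$ and $g_{n+1}/g_n$ into $(1+1/n)^\beta$, so $w_n(\lambda,g)$ becomes exactly $w_n(\lambda,\beta)$ and (\ref{GIMHI}) gives the right-hand inequality $\sum_n w_n(\lambda,\beta)|A_n|^2\le\sum_n|A_n-A_{n-1}|^2/\lambda_n$. The left-hand strict inequality is precisely (\ref{RBETA}): the series expansion of $w_n(\lambda,\beta)$ displayed just before (\ref{RBETA}) has, for $\beta\in(0,1)$ and non-decreasing $\{\lambda_n\}$, only non-negative terms, with the $1/n^2$-term strictly positive, which is the termwise bound $w_n(\lambda,\beta)>\frac{\beta(1-\beta)}{2n^2}\big(\frac1{\lambda_n}+\frac1{\lambda_{n+1}}\big)$; summing this against $|A_n|^2$ for $A$ not identically zero finishes this part. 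So the chain of inequalities is, at bottom, a repackaging of the preceding Remark and Corollaries.

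The substantive content is the optimality of $w_n(\lambda,\beta)$ when $\lambda_n\ge1$ and $\beta\in(0,\tfrac12]$. By the optimality part of Theorem~\ref{THMIDHI} it suffices to produce $\gamma^N\in C_c(\mathbb{N}_0)$ with $\gamma^N\le\gamma^{N+1}$, $\gamma^N\to1$ pointwise, and $\lim_{N\to\infty}\sum_{n\ge2}\frac{g_ng_{n-1}}{\lambda_n}\,|\gamma^N_n-\gamma^N_{n-1}|^2=0$ with $g_n=n^\beta$. Since $\lambda_n\ge1$ forces $\frac{g_ng_{n-1}}{\lambda_n}=\frac{n^\beta(n-1)^\beta}{\lambda_n}\le n^{2\beta}$, it is enough to make $\sum_{n\ge2}n^{2\beta}|\gamma^N_n-\gamma^N_{n-1}|^2\to0$. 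I would take the logarithmic plateau
\[
\gamma^N_n=\begin{cases}1,&0\le n\le N,\\ 2-\dfrac{\ln n}{\ln N},&N<n<N^2,\\ 0,&n\ge N^2,\end{cases}\qquad N\ge 2,
\]
which is finitely supported, equals $1$ at a fixed $n$ once $N\ge n$ (hence $\gamma^N\to1$ pointwise), and is non-decreasing in $N$ (on the transition band $2-\frac{\ln n}{\ln N}$ increases with $N$, while the value is $0$ for $N<\sqrt n$ and $1$ for $N\ge n$). Using $\ln n-\ln(n-1)\le 1/(n-1)$ one gets $|\gamma^N_n-\gamma^N_{n-1}|\le\frac1{(n-1)\ln N}$ on $N<n\le N^2$ and $0$ otherwise, so
\[
\sum_{n\ge2}n^{2\beta}|\gamma^N_n-\gamma^N_{n-1}|^2\le\frac{C}{(\ln N)^2}\sum_{n=N}^{N^2}n^{2\beta-2}.
\]

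The decisive step — and the reason for the restriction $\beta\le\tfrac12$ — is the behaviour of $\sum_{n=N}^{N^2}n^{2\beta-2}$. For $\beta<\tfrac12$ this is a tail of a convergent series and tends to $0$, so the expression is $o((\ln N)^{-2})\to0$; for $\beta=\tfrac12$ it equals $\sum_{n=N}^{N^2}n^{-1}=O(\ln N)$, so the expression is $O((\ln N)^{-1})\to0$. In both cases the limit condition of Theorem~\ref{THMIDHI} holds and $w_n(\lambda,\beta)$ is optimal. The main obstacle is exactly this last point: the polynomial-support logarithmic cut-off is calibrated precisely to $\beta=\tfrac12$, and for $\beta>\tfrac12$ the sum $\sum n^{2\beta-2}$ diverges, so neither this construction nor (one expects) the statement itself survives; the remaining work — checking the monotonicity of $\gamma^N$ in $N$ and bounding the contributions of the two transition endpoints — is routine bookkeeping.
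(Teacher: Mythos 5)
Your proposal is correct, and its first half (the two-sided inequality as a specialization of Theorem \ref{THMIDHI} plus the termwise bound (\ref{RBETA})) is exactly what the paper does. For the optimality you take a genuinely cleaner route: you invoke the sufficient condition (\ref{LIMCOND}) already proved in Theorem \ref{THMIDHI}, use $\lambda_n\geq 1$ to reduce the weight $\tfrac{g_ng_{n-1}}{\lambda_n}$ to $n^{2\beta}$, and then feed in the plain logarithmic cut-off $\gamma^N_n=2-\tfrac{\ln n}{\ln N}$ on $N<n<N^2$, which makes the restriction $\beta\leq\tfrac12$ appear transparently through the convergence of $\sum n^{2\beta-2}$ (divergent tail for $\beta>\tfrac12$, $O(\ln N)$ at $\beta=\tfrac12$). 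The paper instead reruns the remainder-term computation from scratch with the test sequence $A^N_n=\gamma^N_n n^\beta$ and a cut-off whose transition band carries an extra factor $\sqrt{\lambda_n}$, namely $\gamma^N_n=\tfrac{2\log N-\sqrt{\lambda_n}\log n}{\log N}$; the key estimate there, $\tfrac{1}{\lambda_n}\bigl|\sqrt{\lambda_{n-1}}\log(n-1)-\sqrt{\lambda_n}\log n\bigr|^2\leq\bigl(\log\tfrac{n}{n-1}\bigr)^2$, is justified only by monotonicity of $\{\lambda_n\}$, and in fact for non-constant non-decreasing $\lambda$ one has $\sqrt{\lambda_n}\log n-\sqrt{\lambda_{n-1}}\log(n-1)\geq\sqrt{\lambda_n}\log\tfrac{n}{n-1}$, so that bound points the wrong way (and the paper's cut-off need not even be monotone in $N$ or stay in $[0,1]$ when $\lambda_n$ grows). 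Your $\lambda$-free cut-off combined with the hypothesis $\lambda_n\geq1$ sidesteps this entirely, so your version is not only equivalent in spirit but more robust than the printed argument; the only housekeeping left is the endpoint differences at $n=N+1$ and $n=N^2$, which, as you say, are of the same order as the interior terms.
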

\begin{proof}
The proof of the theorem is a combination of Theorem \ref{THMIDHI} and inequality (\ref{RBETA}). We now only establish the optimality of the weight sequence $w_{n}(\lambda,\beta)$ here, and for this we follow the  Krej\v{c}i\v{r}\'{i}k and \v{S}tampach \cite{DKK} approach. Note that for $\beta\in (0,\frac{1}{2}]$ the identity (\ref{THM1IDN}) gives
\begin{align}{\label{GHI}}
\displaystyle\sum_{n=1}^{\infty}\frac{|A_{n}-A_{n-1}|^{2}}{\lambda_{n}}
&=\displaystyle\sum_{n=2}^{\infty}\Big|\Big(\frac{n-1}{n}\Big)^{\frac{\beta}{2}}\frac{A_n}{\sqrt{\lambda_{n}}}-\Big(\frac{n}{n-1}\Big)^{\frac{\beta}{2}}\frac{A_{n-1}}{\sqrt{\lambda_{n}}}\Big|^{2}
+\displaystyle\sum_{n=1}^{\infty}w_{n}(\lambda,\beta)|A_{n}|^{2}.
\end{align}
Suppose $\widetilde{w_{n}}(\lambda,\beta)$ is a sequence such that $\widetilde{w_{n}}(\lambda,\beta)\geq w_{n}(\lambda,\beta)$ holds for all $n\in \mathbb{N}$ and inequality (\ref{GHI}) holds for the sequence $\widetilde{w_{n}}(\lambda,\beta)$. Then for all $A\in C_c(\mathbb{N}_0)$, we have the following:
\begin{align}{\label{WNTILDE}}
\displaystyle\sum_{n=2}^{\infty}
\Big|\Big(\frac{n-1}{n}\Big)^{\frac{\beta}{2}}\frac{A_n}{\sqrt{\lambda_{n}}}-\Big(\frac{n}{n-1}\Big)^{\frac{\beta}{2}}\frac{A_{n-1}}{\sqrt{\lambda_{n}}}\Big|^{2}& \geq \displaystyle\sum_{n=1}^{\infty}(\widetilde{w_{n}}(\lambda,\beta)-w_{n}(\lambda,\beta))|A_{n}|^{2} \geq 0.
\end{align}
Since inequality (\ref{WNTILDE}) vanishes when $A_n=n^{\beta}$, $ \beta\in (0,\frac{1}{2}]$ which is not a finitely supported sequence, so we redefine the sequence $A^{N}_{n}$, $N\geq2$ as $A^{N}_{n}=\gamma^{N}_{n}n^{\beta}$ where $\gamma^{N}_{n}$ defined as below:
\begin{align*}
\gamma^{N}_{n} &= \left\{
\begin{array}{lll}
    1 & \quad \mbox{if~~} n<N,\\
    \frac {2\log N-\sqrt\lambda_{n}\log n}{\log N} & \quad \mbox{if~~} N\leq n\leq N^{2}\\
    0 & \quad \mbox{if~~} n>N^{2}.
\end{array}\right.
\end{align*}
Now
\begin{align*}
&\displaystyle\sum_{n=2}^{\infty}\Big|\Big(\frac{n-1}{n}\Big)^{\frac{\beta}{2}}\frac{A_n^N}{\sqrt{\lambda_{n}}}-\Big(\frac{n}{n-1}\Big)^{\frac{\beta}{2}}\frac{A_{n-1}^N}{\sqrt{\lambda_{n}}}\Big|^{2}\\
&=\displaystyle\sum_{n=2}^{\infty}\frac{1}{\lambda_n}\big(n(n-1)\big)^{\beta}\Big|\gamma^{N}_{n}-\gamma^{N}_{n-1}\Big|^{2}\\
&=\frac{1}{(\log N)^{2}}\displaystyle\sum_{n=N+1}^{N^{2}}\frac{1}{\lambda_{n}}\big(n(n-1)\big)^{\beta}\Big|\sqrt{\lambda_{n-1}}\log(n-1)-\sqrt{\lambda_{n}}\log n\Big|^{2}\\
&\leq \frac{1}{(\log N)^{2}}\displaystyle\sum_{n=N+1}^{N^{2}}\sqrt{n(n-1)}\Big(\log\Big(\frac{n}{n-1}\Big)\Big)^2~~~ (\mbox{since~} \{\lambda_{n}\} \mbox{~is non-decreasing})\\
& \leq\frac{1}{(\log N)^{2}}\displaystyle\sum_{n=N+1}^{N^{2}}\frac{\sqrt{n(n-1)}}{(n-1)^{2}} \leq \frac{1}{(\log N)^{2}}\displaystyle\sum_{n=N+1}^{N^{2}}\Big\{\frac{1}{(n-1)}+\frac{1}{2(n-1)^2}\Big\}
\end{align*}
\begin{align*}
& \leq \frac{1}{(\log N)^{2}} \displaystyle\int_{N}^{N^2}\Big\{\frac{1}{(n-1)}+\frac{1}{2(n-1)^2}\Big\}dn\\
& \leq \frac{1}{\log N} + \frac{N}{2(N^2-1)(\log N)^{2}},
\end{align*}
which tends to $0$ as $N\rightarrow\infty$. Hence from (\ref{WNTILDE}), we get
\begin{center}
$\displaystyle\sum_{n=1}^{\infty}(\widetilde{w_{n}}(\lambda,\beta)-w_{n}(\lambda,\beta))|A_{n}|^{2}=0$.
\end{center}
Therefore, $\widetilde{w_{n}}(\lambda,\beta)= w_{n}(\lambda,\beta)$ holds for all $n\in \mathbb{N}$, that is the sequence $w_{n}(\lambda,\beta)$, $\beta\in(0,\frac{1}{2}]$ is optimal. This completes proof of the theorem.
\end{proof}
\section{Improvement of Copson's inequality}
In this section, we present an improvement of Copson's inequality (\ref{COPIQNEQN}). To achieve it, first we need to establish the following theorem, the statement of which is given below.
\begin{thm}
Let $\{a_n\}$ be any sequence of real or complex numbers such that $A_{n}=\displaystyle\sum_{k=1}^{n}ka_{k}$ with $A_{0}=0$. Then for $1<c\leq 2$, we have
\begin{align}{\label{ineqimvd}}
\displaystyle\sum_{n=1}^{\infty}V_{n}|A_n|^{2}&\leq\displaystyle\sum_{n=1}^{\infty}
S_n^{2-c}\frac{|A_{n}-A_{n-1}|^{2}}{n},
\end{align}where the sequence $V_{n}$ is defined as below:
\begin{align*}
V_{n}& =\frac{S_n^{2-c}}{n}+\frac{S_{n+1}^{2-c}}{n+1}-\frac{S_n^{2-c}}{n}\sqrt{1-\frac{1}{n}}-\frac{S_{n+1}^{2-c}}{n+1}\sqrt{1+\frac{1}{n}},
\end{align*} with $S_n=\frac{n(n+1)}{2}$.
\end{thm}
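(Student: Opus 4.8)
The plan is to read off the inequality (\ref{ineqimvd}) as a special case of Theorem \ref{THMIDHI}. Since the right-hand side of (\ref{ineqimvd}) weights $|A_n-A_{n-1}|^2$ by $S_n^{2-c}/n$, one is forced to take $\lambda_n$ in Theorem \ref{THMIDHI} to be the \emph{reciprocal} of that weight, that is,
\[
\lambda_n=\frac{n}{S_n^{2-c}},\qquad n\in\mathbb{N},
\]
and to take the profile $g_n=\sqrt{n}$ (equivalently $\beta=\tfrac12$ in the notation $g_n=n^{\beta}$), which is the same choice that recovers the classical improved-Hardy weight when $\lambda_n\equiv1$. As $n>0$ and $S_n=\tfrac{n(n+1)}{2}>0$, both $\{\lambda_n\}$ and $\{g_n\}$ are strictly positive, so the hypotheses of Theorem \ref{THMIDHI} hold.

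Carrying out the substitution, one has $1/\lambda_n=S_n^{2-c}/n$, hence
\[
\sum_{n=1}^{\infty}\frac{|A_n-A_{n-1}|^2}{\lambda_n}=\sum_{n=1}^{\infty}S_n^{2-c}\,\frac{|A_n-A_{n-1}|^2}{n},
\]
which is precisely the right-hand side of (\ref{ineqimvd}). For the weight, $g_{n-1}/g_n=\sqrt{(n-1)/n}=\sqrt{1-1/n}$ and $g_{n+1}/g_n=\sqrt{(n+1)/n}=\sqrt{1+1/n}$, so by the formula for $w_n(\lambda,\tfrac12)$ in Remark \ref{RGIMHI} (or by direct computation from the definition of $w_n(\lambda,g)$),
\[
w_n(\lambda,g)=\frac{1}{\lambda_n}+\frac{1}{\lambda_{n+1}}-\frac{1}{\lambda_n}\sqrt{1-\tfrac1n}-\frac{1}{\lambda_{n+1}}\sqrt{1+\tfrac1n}=\frac{S_n^{2-c}}{n}+\frac{S_{n+1}^{2-c}}{n+1}-\frac{S_n^{2-c}}{n}\sqrt{1-\tfrac1n}-\frac{S_{n+1}^{2-c}}{n+1}\sqrt{1+\tfrac1n}=V_n .
\]
Inserting these two identities into the conclusion $\sum_n w_n(\lambda,g)|A_n|^2\le\sum_n|A_n-A_{n-1}|^2/\lambda_n$ of Theorem \ref{THMIDHI} gives (\ref{ineqimvd}); one could equally well rerun the square-completion identity (\ref{THM1IDN}) with these particular $\lambda_n$ and $g_n$.

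A couple of bookkeeping points warrant a line. At $n=1$ the would-be term $g_0/(\lambda_1 g_1)$ is governed by the convention $g_0=0$ used in the proof of Theorem \ref{THMIDHI}, and this is automatically consistent with the stated formula for $V_1$ because $\sqrt{1-1/1}=0$ annihilates exactly that term. Also, in terms of differences the passage $a_n\mapsto A_n=\sum_{k\le n}ka_k$ is the bijection $na_n=A_n-A_{n-1}$ between the sequence $\{a_n\}$ and the difference sequence $\{A_n-A_{n-1}\}_{n\ge1}$ (with $A_0=0$), so the admissibility requirement $A\in C_c(\mathbb{N}_0)$ of Theorem \ref{THMIDHI} translates into ``$\{a_n\}$ finitely supported''; for an arbitrary $\{a_n\}$ the inequality (\ref{ineqimvd}) is trivially true whenever its right-hand side is infinite and otherwise follows by a routine truncation. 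Since the whole argument reduces to the substitution above, there is no genuine obstacle — the only thing to watch is the index/constant bookkeeping, in particular aligning $1/\lambda_{n+1}=S_{n+1}^{2-c}/(n+1)$ with the two ``$n+1$'' terms of $V_n$ and taking $\lambda_n$ to be the reciprocal of the Copson weight rather than the weight itself.
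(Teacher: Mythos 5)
Your proposal is correct and coincides in substance with the paper's own proof: the paper does not cite Theorem \ref{THMIDHI} explicitly but re-runs its square-completion identity with exactly your choices $1/\lambda_n=S_n^{2-c}/n$ and $g_n=\sqrt{n}$ (its auxiliary sequence $\tau_n=(\sqrt{n}-\sqrt{n-1})/n$ is just $h_n/n$ with $h_n=g_n-g_{n-1}$), arriving at the same nonnegative remainder $\sum_{n}\big|\sqrt[4]{(n-1)/n}\,A_n/\sqrt{n}-\sqrt[4]{n/(n-1)}\,A_{n-1}/\sqrt{n}\big|^{2}S_n^{2-c}\ge 0$. Your bookkeeping remarks on the convention $g_0=0$ and on reducing to finitely supported $A$ address the only points the paper leaves implicit.
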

\begin{proof}
Let us denote $\tau_{n}=\frac{\sqrt n-\sqrt{ (n-1)}}{n}$ so that $\tau_{n+1}=\frac{\sqrt{(n+1)}-\sqrt n}{n+1}$. Then $V_{n}$ reduces to the following form:
\begin{center}
$V_{n}= \frac{1}{\sqrt n}\Big(\tau_{n}S_n^{2-c}-\tau_{n+1}S_{n+1}^{2-c}\Big)$.
\end{center}
Thus using the above expression for $V_n$, one can have the following identity.
\begin{align*}
\displaystyle\sum_{n=1}^{\infty}V_{n}|A_{n}|^{2}
& =\displaystyle\sum_{n=1}^{\infty}\Big[\frac{\tau_{n}S_n^{2-c}}{\sqrt n}-\frac{\tau_{n+1}S_{n+1}^{2-c}}{\sqrt n}\Big]|A_{n}|^{2}\\
&=\displaystyle\sum_{n=1}^{\infty}\frac{\tau_{n}}{\sqrt n}S_n^{2-c}|A_{n}|^{2}-\displaystyle\sum_{n=2}^{\infty}
\frac{\tau_{n}}{\sqrt{(n-1)}}S_n^{2-c}|A_{n-1|^{2}}\\
&=\displaystyle\sum_{n=1}^{\infty}\tau_{n}\Big(\frac{|A_{n}|^{2}}{\sqrt n}-\frac{|A_{n-1}|^{2}}{\sqrt{(n-1)}}\Big)S_n^{2-c}.
\end{align*}
Now let us compute the difference between the terms in both sides of inequality (\ref{ineqimvd}), that is
\begin{align}{\label{COPMAINREI}}
&\displaystyle\sum_{n=1}^{\infty}
\frac{|A_{n}-A_{n-1}|^{2}}{n}S_n^{2-c}-\displaystyle\sum_{n=1}^{\infty}V_{n}|A_n|^{2} \nonumber\\
& =\displaystyle\sum_{n=1}^{\infty}\Big[\Big|\frac{A_{n}}{\sqrt n}-\frac{A_{n-1}}{\sqrt n}\Big|^{2}S_n^{2-c}-
\tau_{n}\Big(\frac{|A_{n}|^{2}}{\sqrt n}-\frac {|A_{n-1}|^{2}}{\sqrt{(n-1)}}\Big)S_n^{2-c}\Big] \nonumber\\
& =\displaystyle\sum_{n=1}^{\infty}\Big[\frac{|A_{n}|^{2}}{n}+\frac{|A_{n-1}|^{2}}{n}-\frac{2\mathfrak{R}(\bar{A}_{n}A_{n-1})}{n}-
\tau_{n}\Big(\frac{|A_{n}|^{2}}{\sqrt n}-\frac {|A_{n-1}|^{2}}{\sqrt{(n-1)}}\Big)\Big]S_n^{2-c} \nonumber\\
&=\displaystyle\sum_{n=1}^{\infty}\Big[\Big(1-\frac{n\tau_{n}}{\sqrt n}\Big)
\frac{|A_{n}|^{2}}{n}+\Big(1+\frac {n\tau_{n}}{\sqrt{(n-1)}}\Big)\frac{|A_{n-1}|^{2}}{n}-\frac{2\mathfrak{R}(\bar{A}_{n}A_{n-1})}{n}\Big]S_n^{2-c} \nonumber\\
&=\displaystyle\sum_{n=1}^{\infty}\Big|\sqrt{(1-\frac{n\tau_{n}}{\sqrt n})}\frac{A_{n}}{\sqrt n}
-\sqrt{(1+\frac{n\tau_{n}}{\sqrt{(n-1)}})}\frac{A_{n-1}}{\sqrt{n}}
\Big|^{2}S_n^{2-c} \nonumber \\
&=\displaystyle\sum_{n=1}^{\infty}\Big|\frac{A_{n}}{\sqrt n}\sqrt[4]{\frac{n-1}{n}}
-\frac{A_{n-1}}{\sqrt n}\sqrt[4]{\frac{n}{n-1}}\Big|^{2} S_n^{2-c}\\
& \geq 0. \nonumber
\end{align}
Hence the desired inequality (\ref{ineqimvd}) follows easily.
\end{proof}

Next we present few results with the aim of establishing improvement of Copson's inequality (\ref{COPIQNEQN}).
\begin{Lemma}{\label{COPIML1}}
If $2\geq c\geq\frac{3}{2}$ then $\forall n\in\mathbb{N}$ we get
\begin{align*}
\frac{S_n^{2-c}}{n}& >\frac{S_{n+1}^{2-c}}{n+1}.
\end{align*}
\end{Lemma}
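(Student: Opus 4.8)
The plan is to reduce the claimed inequality to a single elementary estimate and then exploit monotonicity of the power map in its exponent. Write $t = 2 - c$, so that the hypothesis $\frac{3}{2} \leq c \leq 2$ becomes $0 \leq t \leq \frac{1}{2}$. Since every quantity in sight is positive, the inequality $\frac{S_n^{2-c}}{n} > \frac{S_{n+1}^{2-c}}{n+1}$ is equivalent to $\frac{n+1}{n} > \left(\frac{S_{n+1}}{S_n}\right)^{t}$; and because $S_n = \frac{n(n+1)}{2}$ yields $\frac{S_{n+1}}{S_n} = \frac{n+2}{n}$, this is in turn the same as
\[
1 + \frac{1}{n} > \left(1 + \frac{2}{n}\right)^{t}.
\]

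First I would note that $1 + \frac{2}{n} > 1$, so the map $t \mapsto \left(1 + \frac{2}{n}\right)^{t}$ is non-decreasing on $[0, \tfrac{1}{2}]$; hence the right-hand side above is at most its value at $t = \frac{1}{2}$, namely $\sqrt{1 + \frac{2}{n}}$. It therefore suffices to prove the single inequality $1 + \frac{1}{n} > \sqrt{1 + \frac{2}{n}}$, and this bound is uniform in $c$.

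To finish, I would square both sides, which is harmless since both are positive: the claim $\left(1 + \frac{1}{n}\right)^{2} > 1 + \frac{2}{n}$ simplifies to $1 + \frac{2}{n} + \frac{1}{n^{2}} > 1 + \frac{2}{n}$, i.e. $\frac{1}{n^{2}} > 0$, which holds for every $n \in \mathbb{N}$. Reading the chain of equivalences back upward yields the strict inequality for all $n$ and all $c \in [\tfrac{3}{2}, 2]$; the endpoint $c = 2$ (that is, $t = 0$) is included, where the statement reduces to the trivial $\frac{1}{n} > \frac{1}{n+1}$.

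No serious obstacle arises here. The only points needing a little care are the direction of the reduction — one must check that dividing through by positive quantities and raising to a non-negative power preserves the inequality's direction — together with the observation that the estimate is uniform in $c$ because the extremal case is the left endpoint $c = \frac{3}{2}$.
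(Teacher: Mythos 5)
Your proof is correct and follows essentially the same route as the paper: both arguments reduce the claim to $\frac{n+1}{n} > \left(\frac{n+2}{n}\right)^{2-c}$ via $S_{n+1}/S_n = \frac{n+2}{n}$, and both establish this by squaring to get $\left(1+\frac{1}{n}\right)^2 > 1+\frac{2}{n}$ and then using monotonicity of the exponent to cover all $c \in [\frac{3}{2},2]$. Your write-up is, if anything, slightly cleaner in making the substitution $t = 2-c$ explicit and in noting the trivial endpoint $c=2$.
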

\begin{proof}We know that for $n\in \mathbb{N}$, $(n+1)^{2}>n^{2}+2n$. Hence we have
$\Big(\frac{n+1}{n}\Big)^{2}>\frac{n+2}{n}$.\\
Therefore for each $c\geq\frac{3}{2}$, we get
\begin{align*}
\frac{n+1}{n}& >\Big(\frac{n+2}{n}\Big)^{\frac{1}{2}}\geq\Big(\frac{n+2}{n}\Big)^{2-c}.
\end{align*}
Now an easy computation gives
\begin{align*}
\frac{S_n^{2-c}}{n}-\frac{S_{n+1}^{2-c}}{n+1}
& > \Big(\frac{n+2}{n}\Big)^{2-c}\frac{S_n^{2-c}}{n+1}-\frac{S_{n+1}^{2-c}}{n+1}=0,
\end{align*}which proves the desired inequality.
\end{proof}

\begin{Lemma}{\label{COPIML2}}
For any $2\geq c \geq \frac{3}{2}$, we get
\begin{align*}
V_{n}&>\Big(\frac{S_n^{2-c}}{n}+\frac{S_{n+1}^{2-c}}{n+1}\Big)\frac{1}{8n^{2}}.
\end{align*}
\end{Lemma}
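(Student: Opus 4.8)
The plan is to reduce the claimed two-sided-weight inequality to a scalar estimate on the quantity $1-\sqrt{1\pm\frac1n}$, exactly mirroring the role played by the bound $w_n(\lambda,\tfrac12)>\tfrac1{8n^2}(\tfrac1{\lambda_n}+\tfrac1{\lambda_{n+1}})$ in Remark \ref{RGIMHI}. Recall from the previous theorem that
\[
V_n=\frac{S_n^{2-c}}{n}\Big(1-\sqrt{1-\tfrac1n}\Big)+\frac{S_{n+1}^{2-c}}{n+1}\Big(1-\sqrt{1+\tfrac1n}\Big).
\]
The second bracket is negative, so to obtain a clean lower bound I would first control it. Writing $a_n:=\tfrac{S_n^{2-c}}{n}$ and $b_n:=\tfrac{S_{n+1}^{2-c}}{n+1}$, Lemma \ref{COPIML1} gives $a_n>b_n$ for $c\ge\tfrac32$; hence replacing $b_n$ by $a_n$ in the (negative) second term only decreases $V_n$, so
\[
V_n> a_n\Big(2-\sqrt{1-\tfrac1n}-\sqrt{1+\tfrac1n}\Big).
\]
Now the bracket on the right is precisely $w_n=w_n(\{1\},\tfrac12)$ from the Keller et al. corollary, for which the paper has already recorded the estimate $w_n>\tfrac1{4n^2}$.

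The remaining gap is that I want the bound stated in terms of the \emph{average} $\tfrac12(a_n+b_n)$ rather than $a_n$ alone, and with constant $\tfrac1{8n^2}$ rather than $\tfrac1{4n^2}$. Since $b_n<a_n$ we have $\tfrac12(a_n+b_n)<a_n$, so it suffices to show
\[
a_n\,w_n>\frac{a_n+b_n}{2}\cdot\frac1{8n^2},
\]
and because $\tfrac12(a_n+b_n)<a_n$ the inequality $a_n w_n\ge a_n\cdot\tfrac1{4n^2}>a_n\cdot\tfrac1{8n^2}>\tfrac12(a_n+b_n)\cdot\tfrac1{8n^2}$ closes it immediately; the factor-of-two slack absorbs the difference between $a_n$ and the average. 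So the chain is: (i) drop to a single-term lower bound via Lemma \ref{COPIML1}; (ii) identify the surviving factor as $w_n$; (iii) invoke $w_n>\tfrac1{4n^2}$; (iv) spend the extra factor $2$ to pass from $a_n$ to $\tfrac12(a_n+b_n)$.

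The only genuine obstacle is step (iii): one must be sure the elementary estimate $2-\sqrt{1-\tfrac1n}-\sqrt{1+\tfrac1n}>\tfrac1{4n^2}$ holds for \emph{all} $n\in\mathbb N$ including $n=1$ (where $\sqrt{1-1/n}=0$). For $n=1$ the left side is $2-\sqrt2\approx0.586$ while $\tfrac1{4}=0.25$, so it is fine; for $n\ge2$ one expands $\sqrt{1\pm x}=1\pm\tfrac x2-\tfrac{x^2}{8}\mp\cdots$ with $x=\tfrac1n$, and the two series combine to give $w_n=\tfrac{x^2}{4}+O(x^4)>\tfrac{x^2}{4}$ since the $x^4$-coefficient $-\tfrac{2\cdot5}{8\cdot4!}\cdot2$ pattern is such that the even-order remainder is negative — i.e. $w_n=\tfrac1{4n^2}+\tfrac{5}{64n^4}+\cdots>\tfrac1{4n^2}$, which is already quoted in the paper. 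Everything else is the bookkeeping of substituting $a_n,b_n$ back in terms of $S_n$. I would present the argument in the four short displayed steps above, cite Lemma \ref{COPIML1} and the bound on $w_n$, and conclude.
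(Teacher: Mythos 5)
Your route is genuinely different from the paper's and, modulo one bookkeeping slip at the very end, it works. With your notation $a_n=\tfrac{S_n^{2-c}}{n}$, $b_n=\tfrac{S_{n+1}^{2-c}}{n+1}$, the paper proves the lemma by expanding both $\sqrt{1-\tfrac1n}$ and $\sqrt{1+\tfrac1n}$ as full binomial series and regrouping $V_n$ as $(a_n-b_n)\tfrac1{2n}+(a_n+b_n)\tfrac1{8n^2}+(a_n-b_n)\tfrac1{16n^3}+\cdots$, observing via Lemma \ref{COPIML1} that every term is positive and discarding all but the $\tfrac1{8n^2}$ one. You instead majorize the negative summand $b_n(1-\sqrt{1+\tfrac1n})$ by $a_n(1-\sqrt{1+\tfrac1n})$ using Lemma \ref{COPIML1}, reduce to the already-recorded scalar estimate $w_n=2-\sqrt{1-\tfrac1n}-\sqrt{1+\tfrac1n}>\tfrac1{4n^2}$, and then trade $a_n$ for the average. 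This is a legitimate and arguably cleaner reduction: it reuses the Keller--Pinchover--Pogorzelski bound rather than re-deriving the series expansion, at the cost of invoking Lemma \ref{COPIML1} twice.

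The slip: the lemma's target is $V_n>(a_n+b_n)\tfrac1{8n^2}$, which in terms of the average reads $V_n>\tfrac{a_n+b_n}{2}\cdot\tfrac1{4n^2}$, not $\tfrac{a_n+b_n}{2}\cdot\tfrac1{8n^2}$ as in your displayed ``it suffices to show'' line. Your closing chain $a_nw_n\ge a_n\cdot\tfrac1{4n^2}>a_n\cdot\tfrac1{8n^2}>\tfrac12(a_n+b_n)\cdot\tfrac1{8n^2}$ therefore lands at $(a_n+b_n)\tfrac1{16n^2}$, only half of what is claimed: you spent the factor of two twice (once passing from $\tfrac1{4n^2}$ to $\tfrac1{8n^2}$, and once passing from $a_n$ to the average), but you only have it once. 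The repair is immediate and uses nothing new: $a_nw_n>a_n\cdot\tfrac1{4n^2}=2a_n\cdot\tfrac1{8n^2}>(a_n+b_n)\cdot\tfrac1{8n^2}$, the last step again by Lemma \ref{COPIML1}. With that one line corrected, your argument is complete, including the $n=1$ check (where $\sqrt{1-\tfrac1n}=0$) that you already carried out.
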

\begin{proof}
Using the Lemma \ref{COPIML1}, one can deduce the following inequality:
\begin{align*}
V_{n}&=\frac{S_n^{2-c}}{n}+\frac{S_{n+1}^{2-c}}{n+1}-\frac{S_n^{2-c}}{n}\sqrt{(1-\frac{1}{n})}-\frac{S_{n+1}^{2-c}}{n+1}\sqrt{(1+\frac{1}{n})}\\
&=\Big(\frac{S_n^{2-c}}{n}-\frac{S_{n+1}^{2-c}}{n+1}\Big)\frac{1}{2n}+\Big(\frac{S_n^{2-c}}{n}+\frac{S_{n+1}^{2-c}}{n+1}\Big)\frac{1}{8n^{2}}+
\Big(\frac{S_n^{2-c}}{n}-\frac{S_{n+1}^{2-c}}{n+1}\Big)\frac{1}{16n^{3}}+\Big(\frac{S_n^{2-c}}{n}\\
& \hspace{1cm}+\frac{S_{n+1}^{2-c}}{n+1}\Big)\frac{5}{128n^{4}}+\cdots\cdots\\
&>\Big(\frac{S_n^{2-c}}{n}+\frac{S_{n+1}^{2-c}}{n+1}\Big)\frac{1}{8n^{2}}+\Big(\frac{S_n^{2-c}}{n}+\frac{S_{n+1}^{2-c}}{n+1}\Big)\frac{5}{128n^{4}}+\cdots\cdots
\end{align*}Hence we have desired inequality.
\end{proof}

\begin{Lemma}{\label{COPIML3}}
Let $n\in \mathbb{N}$. Then for any $1<c\leq\frac{3}{2}$, we have
\begin{align*}
\frac{1}{8n^{2}}\Big[\frac{S_n^{2-c}}{n}+\frac{S_{n+1}^{2-c}}{n+1}\Big]& > \frac{(c-1)^{2}}{4}\frac{n}{S_n^{c}}.
\end{align*}
\end{Lemma}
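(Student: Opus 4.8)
The plan is to clear denominators so that the claim becomes an elementary inequality in which the dependence on $c$ is monotone, and then to check only the endpoint $c=\frac{3}{2}$.

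First I would multiply both sides of the asserted inequality by the strictly positive quantity $8n\,S_n^{c}$; this preserves the strict inequality. Substituting $S_n=\frac{n(n+1)}{2}$ and $S_{n+1}=\frac{(n+1)(n+2)}{2}$ and cancelling the powers of $2$, one obtains $\frac{S_n^{2}}{n^{2}}=\frac{(n+1)^{2}}{4}$ and $\frac{S_n^{c}S_{n+1}^{2-c}}{n(n+1)}=\frac{n^{c-1}(n+1)(n+2)^{2-c}}{4}$, so that the statement is equivalent to
\begin{align*}
(n+1)^{2}+n^{c-1}(n+1)(n+2)^{2-c} &> 8n^{2}(c-1)^{2}.
\end{align*}
This step is only bookkeeping with exponents.

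Next I would use monotonicity in $c$. Writing $n^{c-1}(n+2)^{2-c}=\frac{(n+2)^{2}}{n}\big(\frac{n}{n+2}\big)^{c}$ and using $0<\frac{n}{n+2}<1$, the left-hand side of the displayed inequality is strictly decreasing in $c$, whereas the right-hand side $8n^{2}(c-1)^{2}$ is strictly increasing on $(1,\frac{3}{2}]$. Hence the difference of the two sides is strictly decreasing on $(1,\frac{3}{2}]$, so it suffices to verify the inequality at $c=\frac{3}{2}$, where it reads $(n+1)^{2}+(n+1)\sqrt{n(n+2)}>2n^{2}$. This last inequality is immediate: $(n+1)^{2}>n^{2}$, and since $\sqrt{n(n+2)}=\sqrt{n^{2}+2n}>n$ and $n+1>n$ we have $(n+1)\sqrt{n(n+2)}>n^{2}$, so adding the two strict inequalities gives the bound for every $n\in\mathbb{N}$. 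This proves the reduced inequality and hence the lemma.

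The only real point in the argument is noticing that clearing denominators produces a quantity monotone in $c$, which lets the whole interval $1<c\le\frac{3}{2}$ collapse to a single check at the endpoint $c=\frac{3}{2}$; everything else is routine, and I do not expect a genuine obstacle here. If one wished to bypass the monotonicity observation, one could instead keep $c$ general and bound $\big(\frac{n}{n+2}\big)^{c}$ from below by $\big(\frac{n}{n+2}\big)^{3/2}$, arriving at the same estimate in a less transparent way.
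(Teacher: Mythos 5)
Your proof is correct and follows essentially the same route as the paper: after clearing denominators both arguments reduce to the same inequality $(n+1)^{2}+n^{c-1}(n+1)(n+2)^{2-c}>8n^{2}(c-1)^{2}$, and both verify it by noting each term on the left exceeds $n^{2}$ while the right side is at most $2n^{2}$. The only difference is that your monotonicity-in-$c$ step is an unnecessary detour — the paper simply uses $8(c-1)^{2}\le 2$ for $c\le\frac{3}{2}$ together with $(1+\frac{2}{n})^{2-c}\ge 1$ for $c\le 2$, which handles the whole range of $c$ at once.
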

\begin{proof}
Let $n\in \mathbb{N}$ and $c>1$. Then we have the following steps
\begin{align*}
& c\leq\frac{3}{2}\\
\Rightarrow &8(c-1)^{2}\leq2<(1+\frac{1}{n})^{2}+(1+\frac{1}{n})(1+\frac{2}{n})^{2-c}\\
\Rightarrow & 8(c-1)^{2}<\frac{(n+1)^{2}}{n^{2}}+(n+1)(n+2)^{2-c}n^{c-3}\\
\Rightarrow &\frac {n(c-1)^{2}}{4}<\Big[\frac{(n+1)^{2}n}{4}+\frac{(n+1)(n+2)^{2-c}}{4}n^{c}\Big]\frac{1}{8n^{2}}=\Big[\frac{S_n^{2}}{n}+\frac{S_n^{c}S_{n+1}^{2-c}}{n+1}\Big]\frac{1}{8n^{2}}\\
\Rightarrow&\frac{(c-1)^{2}}{4}\frac{n}{S_n^{c}}<\frac{1}{8n^{2}}\Big[\frac{S_n^{2-c}}{n}+\frac{S_{n+1}^{2-c}}{n+1}\Big].
\end{align*}
Hence the desired result.
\end{proof}

\begin{Lemma}{\label{COPIML4}}
Suppose that $n\in \mathbb{N}$. Then we have
\begin{align*}
V_n& >\frac{1}{8n^{2}}\Big[\frac{\sqrt{S_n}}{n}+\frac{\sqrt{S_{n+1}}}{n+1}\Big]> \frac{1}{16}\frac{n}{{S_n\sqrt{S_n}}}.
\end{align*}
\end{Lemma}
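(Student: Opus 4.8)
The plan is to treat the two inequalities in the chain separately, after first noting that the sequence $V_n$ appearing in this lemma is the one defined in the preceding theorem specialised to $c=\frac{3}{2}$: with $c=\frac{3}{2}$ one has $S_n^{2-c}=\sqrt{S_n}$, so that
\[
V_n=\frac{\sqrt{S_n}}{n}+\frac{\sqrt{S_{n+1}}}{n+1}-\frac{\sqrt{S_n}}{n}\sqrt{1-\frac{1}{n}}-\frac{\sqrt{S_{n+1}}}{n+1}\sqrt{1+\frac{1}{n}}.
\]
With this identification the left-hand inequality $V_n>\frac{1}{8n^{2}}\big(\frac{\sqrt{S_n}}{n}+\frac{\sqrt{S_{n+1}}}{n+1}\big)$ needs no new argument: it is exactly the conclusion of Lemma~\ref{COPIML2} evaluated at the admissible value $c=\frac{3}{2}$ (since $\frac{3}{2}\in[\frac{3}{2},2]$), upon writing $S_n^{2-c}=\sqrt{S_n}$ and $S_{n+1}^{2-c}=\sqrt{S_{n+1}}$.

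For the right-hand inequality I would substitute $S_n=\frac{n(n+1)}{2}$ and $S_{n+1}=\frac{(n+1)(n+2)}{2}$ and simplify the three relevant quantities to
\[
\frac{\sqrt{S_n}}{n}=\frac{1}{\sqrt{2}}\sqrt{\frac{n+1}{n}},\qquad
\frac{\sqrt{S_{n+1}}}{n+1}=\frac{1}{\sqrt{2}}\sqrt{\frac{n+2}{n+1}},\qquad
\frac{1}{16}\,\frac{n}{S_n\sqrt{S_n}}=\frac{\sqrt{2}}{8n^{2}}\Big(\frac{n}{n+1}\Big)^{3/2}.
\]
Cancelling the common factor $\frac{1}{8\sqrt{2}\,n^{2}}$ from both sides, the asserted inequality $\frac{1}{8n^{2}}\big(\frac{\sqrt{S_n}}{n}+\frac{\sqrt{S_{n+1}}}{n+1}\big)>\frac{1}{16}\frac{n}{S_n\sqrt{S_n}}$ becomes equivalent to
\[
\sqrt{\frac{n+1}{n}}+\sqrt{\frac{n+2}{n+1}}>2\Big(\frac{n}{n+1}\Big)^{3/2},
\]
which is immediate: since $\frac{n+1}{n}>1$ and $\frac{n+2}{n+1}>1$, each term on the left exceeds $1$, so the left-hand side exceeds $2$, while $\big(\frac{n}{n+1}\big)^{3/2}<1$ makes the right-hand side strictly less than $2$.

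There is no serious obstacle here; the only point worth flagging is that one cannot dominate $\frac{1}{16}\frac{n}{S_n\sqrt{S_n}}$ by just one of the two summands $\frac{1}{8n^{2}}\frac{\sqrt{S_n}}{n}$ or $\frac{1}{8n^{2}}\frac{\sqrt{S_{n+1}}}{n+1}$ — a quick check shows each of these single-term bounds fails for large $n$, reflecting that the estimate is asymptotically sharp. Retaining both terms, equivalently using the crude lower bounds $\frac{\sqrt{S_n}}{n}>\frac{1}{\sqrt{2}}$ and $\frac{\sqrt{S_{n+1}}}{n+1}>\frac{1}{\sqrt{2}}$, circumvents this. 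Chaining the two displayed inequalities then yields the lemma.
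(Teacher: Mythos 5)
Your proposal is correct and takes essentially the same route as the paper: the first inequality is exactly Lemma~\ref{COPIML2} evaluated at $c=\tfrac{3}{2}$, which is what the paper does, and the second inequality is exactly Lemma~\ref{COPIML3} at $c=\tfrac{3}{2}$ (where $\frac{(c-1)^{2}}{4}=\frac{1}{16}$ and $S_n^{c}=S_n\sqrt{S_n}$), which the paper simply cites while you re-derive that special case by a direct, correct computation. Your added remark that neither single summand suffices and that the bound is asymptotically tight is accurate but not needed for the lemma.
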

\begin{proof}
Since Lemma \ref{COPIML2} holds for all $\frac{3}{2}\leq c \leq 2$, and Lemma \ref{COPIML3} satisfied for $1<c\leq\frac{3}{2}$, so at the intersecting point that is for $c=\frac{3}{2}$ both the Lemmas hold. Therefore, by combining both these Lemma for the case when $c=\frac{3}{2}$, we get the desired inequality.
\end{proof}
Therefore using the Lemma \ref{COPIML4}, we get an improvement of the inequality (\ref{COPIQNEQN}) for the case when $c=\frac{3}{2}$ as follows:
\begin{align}{\label{IMCOP}}
\displaystyle \sum_{n=1}^{\infty}\frac{1}{16}\frac{n}{{S_n\sqrt{S_n}}}|A_{n}|^{2} & <\displaystyle\sum_{n=1}^{\infty}V_{n}|A_{n}|^{2}\leq\displaystyle \sum_{n=1}^{\infty}{\sqrt{S_n}}\frac{|A_{n}-A_{n-1}|^{2}}{n}.
\end{align}
Now we prove the optimality of the sequence $V_n$. For this let $\widetilde{V_{n}}$ is a sequence such that $\widetilde{V_{n}}\geq V_{n}$ holds for all $n\in \mathbb{N}$ and inequality (\ref{COPMAINREI}) holds for the sequence $\widetilde{V_{n}}$. Then we have the following:
\begin{align}{\label{COPREM}}
\displaystyle\sum_{n=2}^{\infty}
\Big|\sqrt[4]{\frac{n-1}{n}}\frac{A_n}{\sqrt n}-\sqrt[4]{\frac{n}{n-1}}\frac{A_{n-1}}{{\sqrt n}}\Big|^{2}\sqrt{S_n} &\geq \displaystyle\sum_{n=1}^{\infty}(\widetilde{V_{n}}-V_{n})|A_{n}|^{2} \geq 0.
\end{align}
We now redefine the sequence $A^{N}_{n}$, $N\geq2$ as $A^{N}_{n}=\gamma^{N}_{n}\sqrt{n}$ where $\gamma^{N}_{n}$ defined as below:
\begin{align*}
\gamma^{N}_{n} &= \left\{
\begin{array}{lll}
    1 & \quad \mbox{if~~} n<N,\\
    \frac {2\log N- \sqrt[4]{\frac{2n}{(n+1)}}\log n}{\log N} & \quad \mbox{if~~} N\leq n\leq N^{2}\\
    0 & \quad \mbox{if~~} n>N^{2}.
\end{array}\right.
\end{align*}
Now using the above sequence, and Theorem \ref{THMOPTI} at the last stage, we obtain
\begin{align*}
&\displaystyle\sum_{n=2}^{\infty}\Big|\sqrt[4]{\frac{n-1}{n}}A_{n}^N-\sqrt[4]{\frac{n}{n-1}}A_{n-1}^N\Big|^{2}{\sqrt\frac{(n+1)}{2n}}\\
&=\displaystyle\sum_{n=2}^{\infty}\Big|\sqrt[4]{\frac{n-1}{n}}\gamma^{N}_{n}\sqrt{n}-\sqrt[4]{\frac{n}{n-1}}\gamma^{N}_{n-1}\sqrt{(n-1)}\Big|^{2}{\sqrt\frac{(n+1)}{2n}}\\
&=\displaystyle\sum_{n=2}^{\infty}\sqrt{n(n-1)}\Big|\gamma^{N}_{n}-\gamma^{N}_{n-1}\Big|^{2}{\sqrt\frac{(n+1)}{2n}}\\
&=\frac{1}{(\log N)^{2}}\displaystyle\sum_{n=N+1}^{N^{2}}\sqrt{n(n-1)}\Big|\sqrt[4]{\frac{2(n-1)}{n}}\log(n-1)-\sqrt[4]{\frac{2n}{n+1}}\log n\Big|^{2}{\sqrt\frac{(n+1)}{2n}}\\
&\leq\frac{1}{(\log N)^{2}}\displaystyle\sum_{n=N+1}^{N^{2}}\sqrt{n(n-1)}\Big|\sqrt[4]{\frac{2(n-1)}{n}}\log(n-1)-\sqrt[4]{\frac{2(n-1)}{n}}\log(n)\Big|^{2}{\sqrt\frac{(n+1)}{2n}}\\
& \hspace{0.5cm}\big(\mbox{since~} \sqrt[4]{\frac{2(n-1)}{n}}\leq \sqrt[4]{\frac{2n}{n+1}} ~~\&~~ \sqrt {1-\frac{1}{n^{2}}}\leq1\big)\\
&\leq \frac{1}{(\log N)^{2}}\displaystyle\sum_{n=N+1}^{N^{2}}\sqrt{n(n-1)}(\log(\frac{n}{n-1}))^2\\
& \leq\frac{1}{(\log N)^{2}}\displaystyle\sum_{n=N+1}^{N^{2}}\frac{\sqrt{n(n-1)}}{(n-1)^{2}}\\
& \leq \frac{1}{\log N} + \frac{N}{2(N^2-1)(\log N)^{2}},
\end{align*}
which tends to $0$ as $N\rightarrow\infty$. Hence from (\ref{COPREM}), we get
\begin{center}
$\displaystyle\sum_{n=1}^{\infty}(\widetilde{V}_n-V_n)|A_{n}|^{2}=0$.
\end{center}
Therefore, $\widetilde{V}_n= V_n$ holds for all $n\in \mathbb{N}$, hence the sequence $V_{n}$ is optimal. This finishes proof of the optimality.

\section{Fundamental structure of sequence space $\Gamma_p$}
In an unpublished preprint of Fischer et al. \cite{FFR}, an improved Hardy's inequality is obtained for a general $p>1$. Indeed, for $p>1$ it was proved that
\begin{align}{\label{IHGP}}
\displaystyle\sum_{n=1}^{\infty}|A_n-A_{n-1}|^p \geq \sum_{n=1}^{\infty}w_n|A_n|^p>\Big(\frac{p-1}{p}\Big)^p\sum_{n=1}^{\infty}\frac{1}{n^p}|A_n|^p,
\end{align}where $w_n=\Big(1-\Big(\frac{n-1}{n}\Big)^{\frac{p-1}{p}}\Big)^{p-1}-\Big(\Big(\frac{n+1}{n}\Big)^{\frac{p-1}{p}}-1\Big)^{p-1}$, and $A_n=a_1+a_2+\ldots+a_n$. Note that in the case of $p=2$, one gets the inequality (\ref{IMHI2}), an improved version of discrete Hardy's inequality by Keller et al. \cite{MKR}. Therefore, it is natural to study some fundamental properties of the underlying space $\Gamma_p$, which is defined as follows. Let $s$ be denoted as the space of all real or complex sequences, $\{\gamma_n\}$, and $\{q_n\}$ be two sequences of real numbers such that $\gamma_n>0$, and $q_n>0$ for each $n\in \mathbb{N}$. Then for $1<p<\infty$, we define
\begin{align}{\label{GPSEQ}}
\Gamma_p&= \Big\{x=\{a_n\}\in s: \displaystyle\sum_{n=1}^{\infty} \gamma_n\Big|\sum_{k=1}^{n}q_ka_k\Big|^p < \infty\Big\},
\end{align} and when $\gamma_n=\frac{1}{n^p}$ with $q_n=1$ for all $n\in \mathbb{N}$, we denote $\Gamma_p$ by $X_p$, where
\begin{align}{\label{XPSEQ}}
X_p&= \Big\{x=\{a_n\}\in s: \displaystyle\sum_{n=1}^{\infty} \frac{1}{n^p}\Big|\sum_{k=1}^{n}a_k\Big|^p < \infty\Big\}.
\end{align}
The sequence space $X_p$ is known as Ces\`{a}ro sequence spaces of non-absolute type and studied in \cite{PNNGLEE}. Note that when we choose $\gamma_n=w_n$, and $q_n=1$ for all $n\in \mathbb{N}$ in (\ref{GPSEQ}), then $\Gamma_p$ reduces to the sequence space $W_p$ (say), which is directly involved in the inequality (\ref{IHGP}). In case when $p=2$, sequence space $W_2$ is connected with (\ref{IMHI2}). Also for $\gamma_n=V_n$, $p=2$, and $q_n=n$ for all $n\in \mathbb{N}$, we get a sequence space, which is demonstrated in (\ref{IMCOP}).
It is easy to verify that the sequence space $\Gamma_p$ defined as in (\ref{GPSEQ}) is a normed linear space equipped with the norm functional $\|\cdot\|_{\Gamma_p}$ defined as below:
\begin{center}
$\|x\|_{\Gamma_p}=\Big(\displaystyle\sum_{n=1}^{\infty}\gamma_{n} \Big|\displaystyle\sum_{k=1}^{n}q_ka_{k}\Big|^{p}\Big)^{\frac{1}{p}}$.
\end{center}
The sequence space $\Gamma_p$ is of non-absolute type, which means the absolute value of the sequence $|x|=\{|a_n|\}_{n\geq1}$ doesn't belong to $\Gamma_p$ whenever $x\in \Gamma_p$. The following result demonstrates the completeness of the space $\Gamma_p$. Therefore, let us begin with the following proposition.
\begin{Proposition}
The pair $(\Gamma_p, \|\cdot\|_{\Gamma_p})$ is a Banach sequence space of non-absolute type.
\end{Proposition}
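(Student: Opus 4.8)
The plan is to identify $(\Gamma_p,\|\cdot\|_{\Gamma_p})$ with the classical space $\ell_p$ through an explicit linear isometry, so that normedness, completeness, and continuity of the coordinate functionals are inherited all at once. Define $T\colon\Gamma_p\to s$ by $(Tx)_n=\gamma_n^{1/p}\sum_{k=1}^{n}q_ka_k$ for $x=\{a_n\}$. By the definition (\ref{GPSEQ}) one has $T(\Gamma_p)\subseteq\ell_p$ and, tautologically, $\|x\|_{\Gamma_p}=\|Tx\|_{\ell_p}$ for every $x\in\Gamma_p$. The first step is therefore to verify that $T$ maps $\Gamma_p$ \emph{bijectively} onto $\ell_p$.

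Injectivity is immediate: if $Tx=0$ then $\sum_{k=1}^{n}q_ka_k=0$ for all $n$, and since $q_n>0$, differencing consecutive partial sums forces $a_1=0$ and then $a_n=0$ for all $n$. For surjectivity, given $y=\{y_n\}\in\ell_p$ set $b_n=\gamma_n^{-1/p}y_n$ and define $a_1=b_1/q_1$, $a_n=(b_n-b_{n-1})/q_n$ for $n\ge2$; then $\{a_n\}\in s$, $\sum_{k=1}^{n}q_ka_k=b_n$, and $\sum_n\gamma_n|b_n|^p=\sum_n|y_n|^p<\infty$, so $x=\{a_n\}\in\Gamma_p$ with $Tx=y$. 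Hence $T$ is a linear bijection with $\|Tx\|_{\ell_p}=\|x\|_{\Gamma_p}$. Pulling the norm axioms back through $T$ (positive-definiteness from injectivity, homogeneity and the triangle inequality from linearity) shows $\|\cdot\|_{\Gamma_p}$ is a genuine norm, and since $\ell_p$ is complete so is $\Gamma_p$. Finally, from $|b_n|=\gamma_n^{-1/p}|(Tx)_n|\le\gamma_n^{-1/p}\|x\|_{\Gamma_p}$ and $a_n=(b_n-b_{n-1})/q_n$ one gets $|a_n|\le q_n^{-1}\big(\gamma_n^{-1/p}+\gamma_{n-1}^{-1/p}\big)\|x\|_{\Gamma_p}$, so each coordinate functional $x\mapsto a_n$ is bounded; thus $\Gamma_p$ is a $BK$-space, in particular a Banach sequence space. (Alternatively, completeness can be proved directly by showing a $\|\cdot\|_{\Gamma_p}$-Cauchy sequence is coordinatewise convergent with coordinatewise limit in $\Gamma_p$; this is the same computation, only longer.)

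It remains to justify the phrase ``of non-absolute type'', i.e. to exhibit an $x\in\Gamma_p$ with $|x|=\{|a_n|\}\notin\Gamma_p$. Take $a_n=(-1)^{n-1}/q_n$, so that $q_na_n=(-1)^{n-1}$ and the partial sums $\sum_{k=1}^{n}q_ka_k$ alternate between $1$ and $0$; consequently $\|x\|_{\Gamma_p}^p\le\sum_n\gamma_n$, which is finite for the concrete weights entering (\ref{IMHI2}), (\ref{IHGP}), (\ref{IMCOP}) (for $X_p$ one has $\gamma_n=n^{-p}$ with $p>1$; $w_n\asymp n^{-p}$; $V_n\asymp n^{-2}$). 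For the absolute sequence, however, $\sum_{k=1}^{n}q_k|a_k|=n$, so $\||x|\|_{\Gamma_p}^p=\sum_n\gamma_nn^p$, and this diverges: $\sum_n w_nn^p>\big(\tfrac{p-1}{p}\big)^p\sum_n1=\infty$ by (\ref{IHGP}), and $\sum_n V_nn^2>\tfrac1{16}\sum_n n^3S_n^{-3/2}=\infty$ by (\ref{IMCOP}), while the case $X_p$ is classical. Hence $x\in\Gamma_p$ but $|x|\notin\Gamma_p$. (For other pairs $\{\gamma_n\},\{q_n\}$ one chooses $x$ according to the decay of $\gamma_n$; e.g. if $\sum_n\gamma_n=\infty$ one may instead take the finitely supported $a_1=1/q_1$, $a_2=-1/q_2$, $a_n=0$ otherwise.)

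The isometry $T$ is the heart of the matter, and once it is in hand everything about the Banach structure is automatic; the only genuinely content-bearing point is the non-absolute-type claim, where one must exploit cancellation in the partial sums to keep $\|x\|_{\Gamma_p}$ finite while $\||x|\|_{\Gamma_p}$ is infinite. I expect no real obstacle here: the whole proof is a packaging of $\Gamma_p\cong\ell_p$ together with one explicit example.
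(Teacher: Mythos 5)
Your proof is correct, but it takes a different route from the paper's. The paper proves completeness directly: it takes a Cauchy sequence $\{x^{(i)}\}$ in $\Gamma_p$, argues that each coordinate sequence $\{a_n^{(i)}\}_{i\ge1}$ is Cauchy in $\mathbb{C}$ (using $\gamma_n,q_n>0$ to invert the triangular transform), and then passes to the limit in the truncated sums to get norm convergence; the ``non-absolute type'' part is asserted but not actually proved there. You instead establish first that $T\colon x\mapsto\{\gamma_n^{1/p}\sum_{k\le n}q_ka_k\}$ is a surjective linear isometry onto $\ell_p$ and pull back completeness (and the norm axioms, and the boundedness of the coordinate functionals) through $T$. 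This is legitimate and arguably cleaner, but note that it front-loads exactly the content of the paper's \emph{next} proposition, which proves $\Gamma_p\cong\ell_p$ isometrically; in the paper's organization the two facts are kept separate, with completeness obtained by hand. Your approach buys economy (one isometry does all the work and you get the $BK$-property for free); the paper's direct argument buys independence from the isomorphism result. On the non-absolute-type claim you are actually more careful than the paper: your alternating example works for the concrete weights $\gamma_n\in\{n^{-p},w_n,V_n\}$ where $\sum_n\gamma_n<\infty$ and $\sum_n\gamma_n n^p=\infty$, and you correctly flag that for arbitrary positive weights one must adapt the example to the decay of $\gamma_n$ (indeed, for very rapidly decaying $\gamma_n$ one only gets $\|x\|_{\Gamma_p}\neq\||x|\|_{\Gamma_p}$ rather than $|x|\notin\Gamma_p$, which is the weaker but standard meaning of ``non-absolute''). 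No gaps; the argument stands.
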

\begin{proof}
It is only required to show the completeness of the space $(\Gamma_p, \|\cdot\|_{\Gamma_p})$. Suppose that $\{x^{(i)}\}_{i\geq 1}$, where $x^{(i)}=\{a_n^{(i)}\}_{n\geq 1}$ is a cauchy sequence in $(\Gamma_p, \|\cdot\|_{\Gamma_p})$. Then for any $\varepsilon>0$ there exists a natural number $N_0$ such that $\|x^{(i)}-x^{(j)}\|_{\Gamma_p}<\varepsilon$ for all $i, j>N_0$. Hence by definition, we have
\begin{center}
$\|x^{(i)}-x^{(j)}\|_{\Gamma_p}^{p}=\Big(\displaystyle\sum_{n=1}^{\infty}\gamma_{n} \Big|\displaystyle\sum_{k=1}^{n}q_k(a_{k}^{(i)}-a_{k}^{(j)})\Big|^{p}\Big)<\varepsilon^p$.
\end{center}
Since $\gamma_n>0$, and $q_n>0$ for each $n\in \mathbb{N}$, so one can easily concludes that for each $n\in \mathbb{N}$ the sequence $\{a_n^{(i)}\}_{i\geq 1}$ is Cauchy in $\mathbb{C}$, hence converges. Let $\displaystyle\lim_{j\rightarrow\infty}a_n^{(j)}=a_n$ for a fixed $n\in \mathbb{N}$, where $x=\{a_n\}_{n\geq 1}$. Now fix any $m\in \mathbb{N}$. For a given $\varepsilon>0$, there exists $N_0\in \mathbb{N}$, we have
\begin{center}
$\Big(\displaystyle\sum_{n=1}^{m}\gamma_{n} \Big|\displaystyle\sum_{k=1}^{n}q_k(a_{k}^{(i)}-a_{k})\Big|^{p}\Big)^{\frac{1}{p}}<\varepsilon$.
\end{center}Choosing $m\rightarrow\infty$, we get $\|x^{(i)}-x\|_{\Gamma_p}<\varepsilon$ for all $i>N_0$. This completes the proof. Therefore, $(\Gamma_p, \|\cdot\|_{\Gamma_p})$ is a Banach sequence space of non-absolute type.
\end{proof}
Let us define an infinite matrix $G=(g_{nk})_{n, k\geq 1}$ as below:
 \begin{align*}
g_{nk} &= \left\{
\begin{array}{ll}
    \gamma_n^{\frac{1}{p}}q_k & \quad 1\leq k \leq n,\\ \vspace{0.2cm}
    0 & \quad k>n.
\end{array}\right.
\end{align*}
Note that the matrix $G$ includes more general matrices such as generalized Ces\`{a}ro matrix $C^N$ when one chooses $q_k=1$, and $\gamma_n=\frac{1}{(n+N)^p}$, weighted mean matrix when $\gamma_n=\frac{1}{Q_n^p}$, and etc. These matrices are studied by Chen et al. \cite{CHENLOUR} in connection with Hardy's inequality. Further, it is easy to observe that the sequence space $\Gamma_p$ can be written equivalently as $\Gamma_p=\big\{x=\{a_n\}\in s: ~Gx \in l_p\big\}$. Also since $G$ is lower triangular, so it is invertible, and the inverse matrix $G^{-1}$ is defined as below:
\begin{align*}
g_{nk}^{-1} &= \left\{
\begin{array}{ll}
    \frac{\gamma_k^{-\frac{1}{p}}}{q_n} & \quad k = n, n\geq 1,\\ \vspace{0.2cm}
    -\frac{\gamma_{k}^{-\frac{1}{p}}}{q_n} & \quad k = n-1, n\geq 2,\\
    0 & \quad \mbox{elsewhere}.
\end{array}\right.
\end{align*}
Now we have another proposition given below.
\begin{Proposition}
The space $(\Gamma_p, \|\cdot\|_{\Gamma_p})$ is linearly isomorphic to the $p$-summable sequence space $(l_p, \|\cdot\|_{p})$.
\end{Proposition}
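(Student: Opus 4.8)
The plan is to exhibit the isomorphism explicitly by means of the lower triangular matrix $G=(g_{nk})$ introduced just above. The key facts are already in place: by the very definition of the norm one has $\|x\|_{\Gamma_p}=\|Gx\|_{p}$ for every $x\in\Gamma_p$, and $\Gamma_p=\{x\in s:\ Gx\in l_p\}$. So the natural candidate is the linear map $T:\Gamma_p\to l_p$ given by $Tx=Gx$.

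First I would check that $T$ is a well-defined linear isometry. Linearity is clear, since $x\mapsto Gx$ is a linear operation on $s$; well-definedness (that $Tx\in l_p$ whenever $x\in\Gamma_p$) and the isometric identity $\|Tx\|_{p}=\|Gx\|_{p}=\|x\|_{\Gamma_p}$ are immediate from the description of $\Gamma_p$ and its norm. In particular $T$ is injective, which is also transparent from the invertibility of $G$, a lower triangular matrix with nonzero diagonal entries $\gamma_n^{1/p}q_n$.

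The only point that needs a genuine (if short) argument is surjectivity. Given $y=\{y_n\}\in l_p$, I would set $x=G^{-1}y$ using the explicit lower bidiagonal inverse displayed above, that is $a_1=\gamma_1^{-1/p}y_1/q_1$ and $a_n=\gamma_n^{-1/p}y_n/q_n-\gamma_{n-1}^{-1/p}y_{n-1}/q_n$ for $n\geq2$. Each $a_n$ is a finite sum, so $x$ is a well-defined element of $s$, and the routine entrywise check $GG^{-1}=I$ (legitimate precisely because both matrices are lower triangular, so only finite sums occur) gives $Gx=y$. Consequently $\sum_{n=1}^{\infty}\gamma_n|\sum_{k=1}^{n}q_ka_k|^{p}=\|Gx\|_p^{p}=\|y\|_p^{p}<\infty$, whence $x\in\Gamma_p$, $Tx=y$, and $\|x\|_{\Gamma_p}=\|y\|_p$.

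Combining these steps, $T$ is a surjective linear isometry from $\Gamma_p$ onto $l_p$, hence an isometric -- and in particular linear -- isomorphism, which is the assertion of the proposition. There is no deep obstacle here: the only mildly delicate point is the surjectivity step, where one must confirm that the explicit inverse matrix carries $l_p$ back into $\Gamma_p$, and that reduces entirely to the computation $GG^{-1}=I$ with the displayed formulas. (Note that since $T$ is an isometry, no appeal to the open mapping theorem or to completeness is needed to see that it is a topological isomorphism.)
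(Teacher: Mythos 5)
Your proposal is correct and follows essentially the same route as the paper: both define $T x = Gx$, observe that the norm identity $\|Tx\|_p=\|x\|_{\Gamma_p}$ is built into the definition of $\|\cdot\|_{\Gamma_p}$, and establish surjectivity via the explicit lower bidiagonal inverse $G^{-1}$ with $a_n=\frac{1}{q_n}(\gamma_n^{-1/p}b_n-\gamma_{n-1}^{-1/p}b_{n-1})$. Your remark that injectivity and the topological isomorphism come for free from the isometry is a small but welcome clarification of what the paper leaves implicit.
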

\begin{proof}
We need to prove that there is an existence of a bijective and norm preserving linear map $T:\Gamma_p\rightarrow l_p$. Using the matrix $G$, we define $T$ as $Tx=Gx$ for all $x\in \Gamma_p$. Then $T$ is linear, and for any $y=\{b_n\}_{n\geq 1}\in l_p$, with $b_0=0$ we have $y=Gx$, and hence $x=G^{-1}y$. In fact the terms of the sequence $x=\{a_n\}_{n\geq 1}$ are $a_n=\frac{1}{q_n}(\gamma_n^{-\frac{1}{p}}b_n-\gamma_{n-1}^{-\frac{1}{p}}b_{n-1})$, for $n\in \mathbb{N}$. The map $T$ is also norm preserving as
\begin{align*}
\|x\|_{\Gamma_p}^p&=\displaystyle\sum_{n=1}^{\infty}\gamma_{n} \Big|\displaystyle\sum_{k=1}^{n}q_ka_{k}\Big|^{p}=\displaystyle\sum_{n=1}^{\infty}\gamma_{n} \Big|\displaystyle\sum_{k=1}^{n}\gamma_k^{-\frac{1}{p}}b_k-\gamma_{k-1}^{-\frac{1}{p}}b_{k-1}\Big|^{p}= \sum_{n=1}^{\infty}|b_n|^p=\|y\|_p^p=\|Tx\|_p^p.
\end{align*}One can also easily established that the map $T$ is onto. Therefore, we conclude that there exists a isometric isomorphism between sequence spaces $\Gamma_p$ and $l_p$. This completes the proof.
\end{proof}

\begin{Proposition}
The space $(\Gamma_p, \|\cdot\|_{\Gamma_p})$ is not a Hilbert space except when $p\neq 2$.
\end{Proposition}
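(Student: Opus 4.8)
The plan is to reduce the assertion to the classical fact that $l_p$ is a Hilbert space only for $p=2$, using the isometric isomorphism $T:\Gamma_p\to l_p$ constructed in the preceding Proposition together with the Jordan--von Neumann parallelogram-law characterization of inner-product norms.

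First I would recall that a normed linear space $(E,\|\cdot\|)$ has a norm induced by an inner product if and only if it satisfies the parallelogram identity
\[
\|x+y\|^{2}+\|x-y\|^{2}=2\|x\|^{2}+2\|y\|^{2}\qquad\text{for all }x,y\in E .
\]
Since $(\Gamma_p,\|\cdot\|_{\Gamma_p})$ has already been shown to be complete, it is a Hilbert space exactly when this identity holds on it. Now the previous Proposition furnishes a surjective linear map $T:\Gamma_p\to l_p$ with $\|Tx\|_p=\|x\|_{\Gamma_p}$ for every $x$; such a map transports the parallelogram identity back and forth, so $\Gamma_p$ obeys the parallelogram law if and only if $l_p$ does.

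It then suffices to check that $l_p$ violates the parallelogram law whenever $p\neq2$, and for this I would simply test the two coordinate vectors $e_1=(1,0,0,\dots)$ and $e_2=(0,1,0,\dots)$. Here $\|e_1\|_p=\|e_2\|_p=1$ and $\|e_1\pm e_2\|_p=2^{1/p}$, so the left-hand side of the identity equals $2\cdot2^{2/p}$ while the right-hand side equals $4$; the two are equal precisely when $p=2$. Transporting through $T^{-1}$, the explicit sequences $x^{(1)}=T^{-1}e_1$ and $x^{(2)}=T^{-1}e_2$ in $\Gamma_p$, whose entries are read off from the formula $a_n=\tfrac{1}{q_n}\big(\gamma_n^{-1/p}b_n-\gamma_{n-1}^{-1/p}b_{n-1}\big)$ of that Proposition, are concrete witnesses to the failure of the parallelogram law in $\Gamma_p$ for $p\neq2$; and for $p=2$ the same isometry shows $\Gamma_2$ is isometrically a Hilbert space. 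Hence $\Gamma_p$ is not a Hilbert space unless $p=2$.

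No serious obstacle is expected; the only things to be careful about are that the definition of Hilbert space also requires completeness --- which is why the earlier completeness Proposition must be cited --- and that it is the \emph{isometric} nature of $T$, not mere linear isomorphism, that allows the inner-product structure to be transferred, exactly as guaranteed by the previous Proposition.
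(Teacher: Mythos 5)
Your proposal is correct and follows essentially the same route as the paper: both arguments rest on the Jordan--von Neumann parallelogram-law criterion and on the isometric isomorphism $T:\Gamma_p\to l_p$, the paper simply writing out the explicit preimages $T^{-1}(e_1+e_2)$ and $T^{-1}(e_1-e_2)$ and computing their norms directly rather than phrasing the step as a transfer of the identity. Your test vectors $e_1,e_2$ give the same computation up to the symmetric relabelling of $\{x,y\}$ and $\{x+y,x-y\}$, so there is no substantive difference.
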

\begin{proof}
In case when $p=2$, then it is very easy to verify that the parallelogram identity $\|x+y\|_{\Gamma_p}^2+\|x-y\|_{\Gamma_p}^2=2(\|x\|_{\Gamma_p}^2+\|y\|_{\Gamma_p}^2)$ holds. Hence $\Gamma_2$ is a Hilbert space. Now choose $p\neq 2$, and $x=\{\frac{\gamma_1^{-\frac{1}{p}}}{q_1}, \frac{1}{q_2}(\gamma_2^{-\frac{1}{p}}-\gamma_1^{-\frac{1}{p}}), -\frac{\gamma_2^{-\frac{1}{p}}}{q_3}, 0, 0, \ldots\}$, $y=\{\frac{\gamma_1^{-\frac{1}{p}}}{q_1}, -\frac{1}{q_2}(\gamma_2^{-\frac{1}{p}}+\gamma_1^{-\frac{1}{p}}), \frac{\gamma_2^{-\frac{1}{p}}}{q_3}, 0, 0, \ldots\}$. For such sequences, we have $\|x+y\|_{\Gamma_p}^2=4$, $\|x-y\|_{\Gamma_p}^2=4$, $\|x\|_{\Gamma_p}^2=4^{1/p}=\|y\|_{\Gamma_p}^2$. Therefore, $\|x+y\|_{\Gamma_p}^2+\|x-y\|_{\Gamma_p}^2=8\neq 4^{\frac{p+1}{p}}=2(\|x\|_{\Gamma_p}^2+\|y\|_{\Gamma_p}^2)$ is true for $p\neq 2$. Hence the result follows.
\end{proof}

\begin{Proposition}
The following inclusions results are true:\\
$(a)$ if $\gamma_n=w_n$, $q_n=1$ for all $n\in \mathbb{N}$ then for any $p>1$, $l_p\subset W_p \subseteq X_p$.\\
$(b)$ if $\{Q_n\gamma_n^{\frac{1}{p}}\}_{n\geq 1}\in l_p$ then $l_\infty\subset \Gamma_p$.
\end{Proposition}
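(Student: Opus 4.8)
The plan is to obtain the forward inclusions by one-line majorizations using the inequalities already established, and to settle each strict inclusion by exhibiting an explicit witness.

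\emph{Part $(a)$.} For $l_p\subseteq W_p$: if $x=\{a_n\}\in l_p$ then, since $A_n-A_{n-1}=a_n$ with $A_0=0$, the left-hand side of the Fischer et al. inequality (\ref{IHGP}) equals $\sum_{n\ge1}|a_n|^p=\|x\|_p^p<\infty$, so (\ref{IHGP}) gives $\sum_{n\ge1}w_n|A_n|^p\le\|x\|_p^p<\infty$, i.e. $x\in W_p$. For $W_p\subseteq X_p$ one reads the second inequality in (\ref{IHGP}) backwards: if $\sum_{n\ge1}w_n|A_n|^p<\infty$ then $\big(\tfrac{p-1}{p}\big)^p\sum_{n\ge1}n^{-p}|A_n|^p<\sum_{n\ge1}w_n|A_n|^p<\infty$, so $x\in X_p$. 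For the strictness $l_p\subsetneq W_p$ the natural witness is $a_n=(-1)^n$: it is not in $l_p$, yet $|A_n|\in\{0,1\}$, so $\sum_{n\ge1}w_n|A_n|^p\le\sum_{n\ge1}w_n$, and it remains to verify $\sum_{n\ge1}w_n<\infty$. This holds because $w_n=O(n^{-p})$ with $p>1$, which I would prove by writing $w_n=\big(1-(\tfrac{n-1}{n})^{(p-1)/p}\big)^{p-1}-\big((\tfrac{n+1}{n})^{(p-1)/p}-1\big)^{p-1}$ and estimating the difference by the mean value theorem applied to $t\mapsto t^{p-1}$. Hence $\{(-1)^n\}\in W_p\setminus l_p$.

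\emph{Part $(b)$.} If $x=\{a_n\}\in l_\infty$ with $\|x\|_\infty=M$, then positivity of the $q_k$ gives $\big|\sum_{k=1}^n q_ka_k\big|\le M\sum_{k=1}^n q_k=MQ_n$, hence
\[
\|x\|_{\Gamma_p}^p=\sum_{n\ge1}\gamma_n\Big|\sum_{k=1}^n q_ka_k\Big|^p\le M^p\sum_{n\ge1}\gamma_nQ_n^p=M^p\sum_{n\ge1}\big(Q_n\gamma_n^{1/p}\big)^p<\infty
\]
by the hypothesis $\{Q_n\gamma_n^{1/p}\}\in l_p$, so $l_\infty\subseteq\Gamma_p$. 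For the strict inclusion I would build an unbounded element of $\Gamma_p$. Since $\gamma_n=\big(Q_n\gamma_n^{1/p}\big)^pQ_n^{-p}\le q_1^{-p}\big(Q_n\gamma_n^{1/p}\big)^p$ we have $\sum_n\gamma_n<\infty$ and in particular $Q_n\gamma_n^{1/p}\to0$; choose an increasing sequence $n_1<n_2<\cdots$ with gaps at least $2$ and $Q_{n_j}\gamma_{n_j}^{1/p}\le j^{-1}2^{-j/p}$, and prescribe the partial-sum transform $b_n:=\sum_{k=1}^n q_ka_k$ by $b_{n_j}:=j\,q_{n_j}$ and $b_n:=0$ otherwise. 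The underlying sequence is $a_n=(b_n-b_{n-1})/q_n$, so $a_{n_j}=j\to\infty$ and $x\notin l_\infty$, while $\|x\|_{\Gamma_p}^p=\sum_n\gamma_n|b_n|^p=\sum_j j^p q_{n_j}^p\gamma_{n_j}\le\sum_j j^p\big(Q_{n_j}\gamma_{n_j}^{1/p}\big)^p\le\sum_j 2^{-j}<\infty$, so $x\in\Gamma_p\setminus l_\infty$.

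The forward inclusions are routine; the content lies in the two strictness claims. I expect the decay bound $\sum_n w_n<\infty$ in part $(a)$ to be the main obstacle, as it requires uniform — not merely asymptotic — control of a difference of fractional powers; the sparse construction in part $(b)$ is straightforward once one records $Q_n\gamma_n^{1/p}\to0$ and tracks the telescoping identity $a_n=(b_n-b_{n-1})/q_n$.
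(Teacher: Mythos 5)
Your proof is correct, and for the forward inclusions and for part $(a)$ it is essentially the paper's argument: the paper likewise reads both inclusions of $(a)$ directly off inequality (\ref{IHGP}) and uses the same witness $x=\{(-1)^n\}$, merely asserting that $\sum_n w_n$ converges where you supply the (correct) justification $w_n=O(n^{-p})$ via a mean-value estimate on $t\mapsto t^{p-1}$. The genuine divergence is in the strictness of $(b)$. The paper's own example replaces $q$ by the particular sequence $\{\gamma_1^{-1/p},\,\gamma_1^{-1/p}/2,\,0,\dots\}$ and takes $x=\{(-1)^{n+1}n\}$; this establishes strictness only for one special choice of $q$, and that choice has vanishing entries, contradicting the standing assumption $q_n>0$. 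Your lacunary construction --- prescribing the transformed sequence $b_n=\sum_{k\le n}q_ka_k$ on a sparse set chosen so that $Q_{n_j}\gamma_{n_j}^{1/p}\le j^{-1}2^{-j/p}$ (possible since $\{Q_n\gamma_n^{1/p}\}\in l_p$ forces $Q_n\gamma_n^{1/p}\to0$) and recovering $a_n=(b_n-b_{n-1})/q_n$ --- produces an unbounded element of $\Gamma_p$ for \emph{every} admissible pair $(q,\gamma)$ satisfying the hypothesis, so it is the stronger and cleaner argument. The only step you use silently is $q_{n_j}\le Q_{n_j}$ in the final estimate, which is immediate from positivity; everything else checks out.
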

\begin{proof}
$(a)$ The result directly follows from the inequality (\ref{IHGP}). Note that the series $\displaystyle\sum_{n=1}^{\infty}w_n$ converges. The inclusion $l_p\subset W_p$ is strict as the sequence $x=\{(-1)^n\}_{n\geq 1}\in W_p$ but $x\notin l_p$.\\
$(b)$ Since $x\in l_\infty$, so for each $n\in \mathbb{N}$, there is a constant $M>0$ such that $\Big|\displaystyle\sum_{k=1}^{n}q_ka_{k}\Big|^p\leq (MQ_n)^p$, where $Q_n=q_1+\ldots+q_n>0$. Hence the desired inclusion follows easily from $\displaystyle\sum_{n=1}^{\infty} \gamma_n\Big|\sum_{k=1}^{n}q_ka_k\Big|^p \leq \sum_{n=1}^{\infty}Q_n^p\gamma_n<\infty$. The inclusion is strict as if we choose the sequences $q=\{\gamma_1^{-\frac{1}{p}}, \frac{\gamma_1^{-\frac{1}{p}}}{2}, 0, \ldots\}$, and $\{\gamma_n\}\in l_1$ then $x=\{(-1)^{n+1}n\}\in \Gamma_p$ but $x\notin l_\infty$.
\end{proof}
In the next result, we obtain the associate space of $\Gamma_p$ for $p>1$. The associate space (known as $\beta$-dual or K\"{o}the-Toeplitz dual in literature) of $X$ (see \cite{MADDOXLN}) is denoted by $X^{'}$ or $X^{\beta}$, and defined as the space of all sequences $y=\{b_n\}_{n\geq 1}$ such that for every $x=\{a_n\}_{n\geq 1}\in X$, the series $\displaystyle\sum_{n=1}^{\infty}a_nb_n$ converges.
\begin{thm}
The associate space of $\Gamma_p$ is the set of all sequences $y=\{b_n\}_{n\geq 1}$ such that $\displaystyle \sup_{n\geq 1}\Big\{\sum_{k=1}^{n-1}|\gamma_k^{-\frac{1}{p}}(b_k-b_{k+1})|^q+ |\gamma_n^{-\frac{1}{p}}b_n|^q \Big\}^{\frac{1}{q}}<\infty$.
\end{thm}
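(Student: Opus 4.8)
The plan is to reduce the problem to the duality $l_p^{*}=l_q$ (where $\tfrac1p+\tfrac1q=1$) through the isometric isomorphism $T\colon\Gamma_p\to l_p$, $Tx=Gx$, constructed above, a single summation by parts, and the Banach--Steinhaus theorem. For transparency I describe the argument in the case $q_n\equiv1$; the general case is identical after replacing each $b_m$ by $b_m/q_m$. First I would fix $x=\{a_n\}\in\Gamma_p$, put $c_n=\gamma_n^{1/p}\sum_{k=1}^{n}a_k$ with $c_0=0$, so that $c=\{c_n\}\in l_p$, $\|c\|_p=\|x\|_{\Gamma_p}$, and, by the formula for $G^{-1}$, $a_n=\gamma_n^{-1/p}c_n-\gamma_{n-1}^{-1/p}c_{n-1}$. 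Substituting this into $\sum_{n=1}^{N}a_nb_n$, shifting the index in the part carrying $c_{n-1}$ (the $c_0$ term drops out), and collecting the coefficient of each $c_n$ yields, for every $N$, the finite identity
\begin{align*}
\sum_{n=1}^{N}a_nb_n=\sum_{n=1}^{N-1}\gamma_n^{-1/p}(b_n-b_{n+1})\,c_n+\gamma_N^{-1/p}b_N\,c_N=:f_N(c).
\end{align*}
Since $T$ maps $\Gamma_p$ onto $l_p$, the statement ``$y\in\Gamma_p^{\beta}$'' is equivalent to ``$\lim_{N}f_N(c)$ exists for every $c\in l_p$''. Each $f_N$ is a bounded linear functional on $l_p$ with finitely supported representing sequence, so $l_p^{*}=l_q$ gives $\|f_N\|=\bigl\{\sum_{k=1}^{N-1}|\gamma_k^{-1/p}(b_k-b_{k+1})|^{q}+|\gamma_N^{-1/p}b_N|^{q}\bigr\}^{1/q}$, which is exactly the expression inside the supremum in the theorem.

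For the ``if'' part, assume $M:=\sup_{N}\|f_N\|<\infty$. Reading off the individual summands inside the supremum, $\{\gamma_k^{-1/p}(b_k-b_{k+1})\}_k\in l_q$ and $\{\gamma_n^{-1/p}b_n\}_n$ is bounded, each by $M$. Then for every $c\in l_p$, H\"older's inequality makes $\sum_{n\ge1}\gamma_n^{-1/p}(b_n-b_{n+1})c_n$ absolutely convergent, while $\gamma_N^{-1/p}b_N\,c_N\to0$ because $c_N\to0$. Hence $\lim_{N}f_N(c)$ exists for all $c\in l_p$, i.e.\ $\sum_{n}a_nb_n$ converges for all $x\in\Gamma_p$, so $y\in\Gamma_p^{\beta}$.

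For the ``only if'' part, assume $y\in\Gamma_p^{\beta}$, so $\{f_N(c)\}_N$ converges for every $c\in l_p$. A pointwise convergent sequence of bounded linear functionals on the Banach space $l_p$ is uniformly norm-bounded by the Banach--Steinhaus theorem, whence $\sup_{N}\|f_N\|<\infty$, which is precisely the asserted condition. The two implications together identify $\Gamma_p^{\beta}$ with the stated set.

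I expect the only genuinely delicate step to be the summation by parts: performing the index shift correctly and keeping the boundary term $\gamma_N^{-1/p}b_N c_N$, so that $\sum_{n=1}^{N}a_nb_n$ is literally the value $f_N(c)$ of a single functional on $l_p$. Once that identity is secured, the duality $l_p^{*}=l_q$ and Banach--Steinhaus are routine; the only extra care is to note that finiteness of the supremum yields, simultaneously, the $l_q$-membership of the difference sequence and the boundedness of $\{\gamma_n^{-1/p}b_n\}$ used in the sufficiency half.
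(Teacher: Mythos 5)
Your proof is correct and follows essentially the same route as the paper: the identical Abel summation turns the partial sums $\sum_{k\le N}a_kb_k$ into values of finitely supported functionals on $l_p$ (the rows of the paper's matrix $H$ applied to $\sigma=Gx$), and the stated supremum is exactly $\sup_N\|f_N\|_{l_q}$. The only differences are that where the paper cites the Stieglitz--Tietz characterization of matrix maps $l_p\to c$ you prove both directions directly via H\"older and Banach--Steinhaus (which is precisely the standard proof of that characterization), and your remark that general weights require replacing $b_m$ by $b_m/q_m$ is actually more careful than the paper's identity (\ref{DUALI}), whose right-hand side as written equals $\sum_{k=1}^{n}q_ka_kb_k$ rather than $\sum_{k=1}^{n}a_kb_k$ unless $q_k\equiv 1$.
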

\begin{proof}
Let $x=\{a_n\}_{n\geq 1}\in \Gamma_p$, and $y=\{b_n\}_{n\geq 1}\in (\Gamma_p)'$ be two sequences. Using the idea of Abel transformation, one can write
\begin{align}{\label{DUALI}}
\displaystyle\sum_{k=1}^{n}a_kb_k
& = \sum_{k=1}^{n-1}\Big(\gamma_k^{\frac{1}{p}}\sum_{i=1}^{k}q_ia_i\Big)\Big(\gamma_k^{-\frac{1}{p}}(b_k-b_{k+1})\Big) + \Big(\gamma_n^{\frac{1}{p}}\sum_{i=1}^{n}q_ia_i\Big)\Big(\gamma_n^{-\frac{1}{p}}b_n\Big) = \displaystyle\sum_{k=1}^{n} h_{nk}\sigma_k,
\end{align}where $\sigma_k=\gamma_k^{\frac{1}{p}}\displaystyle\sum_{i=1}^{k}q_ia_i$, and the matrix $H=(h_{nk})_{n, k \geq 1}$ is given as below:
\begin{align*}
h_{nk} &= \left\{
\begin{array}{lll}
    \gamma_k^{-\frac{1}{p}}(b_k-b_{k+1}) & \quad 1\leq k \leq n-1,\\ \vspace{0.2cm}
    \gamma_n^{-\frac{1}{p}}b_n & \quad k=n\\
    0 & \quad k>n.
\end{array}\right.
\end{align*}
Note that for $x\in \Gamma_p$, the sequence $\sigma=(\sigma_k)_{k\geq 1}$ is in $l_p$. Since for each $k\in \mathbb{N}$, the sequence $\{h_{nk}\}_{n=1}^{\infty}$ converges so $H\sigma\in c$, where $c$ is the space of all convergent sequences. Therefore, from (\ref{DUALI}), we conclude that for $x\in \Gamma_p$, sequence $\{a_nb_n\}_{n\geq 1}\in cs$, where $cs$ is the space of all convergent series if and only if $H\sigma\in c$ whenever $\sigma\in l_p$. This yields the fact that $y\in (\Gamma_p)'$ iff $H: l_p\rightarrow c$. It is known that from Stieglitz and Tietz result \cite{STTIZ} that the map $H: l_p\rightarrow c$ is possible if and only if the following conditions hold:\\
$(i)$ $\displaystyle\lim_{k\rightarrow\infty}h_{nk}$ exists for each $k\in \mathbb{N}$, and\\
$(ii)$ $\displaystyle\sup_{n\geq 1} \Big(\sum_{k=1}^{n}|h_{nk}|^q\Big)^{\frac{1}{q}}<\infty$, where $\frac{1}{p}+\frac{1}{q}=1$.\\
Since the first condition is trivially satisfied so the second condition gives the associate space $(\Gamma_p)'$. Hence the theorem is proved.
\end{proof}
For further results on the space $\Gamma_p$, we need to define a sequence $\{u_n\}_{n\geq 1}$ of sequences, where\\
$u_{1}=(\frac{\gamma_{1}^{-\frac{1}{p}}}{q_{1}}, -\frac{\gamma_{1}^{-\frac{1}{p}}}{q_{2}}, 0, 0,\cdots)$, \ldots,
$u_{n}=(0,\cdots 0, \frac{\gamma_{n}^{-\frac{1}{p}}}{q_{n}}, -\frac{\gamma_{n}^{-\frac{1}{p}}}{q_{n+1}}, 0, 0,\cdots)$, \ldots.\\
It is easy to verify that $\|u_n\|_{\Gamma_p}=1$ for each $n\in \mathbb{N}$. Then we have the following result.
\begin{thm}Let $x\in \Gamma_p$ for $p>1$. Then\\
(i) for every $x=\{a_n\}_{n\geq 1}$ we have $x=\displaystyle\sum_{i=1}^{\infty}\Big(\gamma_{i}^{\frac{1}{p}}\displaystyle\sum_{k=1}^{i}q_{k}a_{k}\Big)u_{i}$.\\
(ii) $\Gamma_p$ is separable.
\end{thm}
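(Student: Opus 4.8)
The plan is to deduce both parts from the isometric isomorphism $T:\Gamma_p\to\ell_p$, $Tx=Gx$, established above, together with the classical fact that the unit vectors $\{e_i\}_{i\ge1}$ form a Schauder basis of $\ell_p$ for $1\le p<\infty$. For part (i), the first step is to identify $Tu_i$. Since $u_i$ is supported on the indices $i$ and $i+1$, with entries $\gamma_i^{-1/p}/q_i$ and $-\gamma_i^{-1/p}/q_{i+1}$ respectively, the lower-triangular sum $\gamma_m^{1/p}\sum_{k=1}^{m}q_k(u_i)_k$ vanishes for $m<i$, telescopes to $0$ for $m>i$, and equals $1$ for $m=i$; hence $Tu_i=e_i$, equivalently $T^{-1}e_i=u_i$. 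Next, for $x=\{a_n\}_{n\ge1}\in\Gamma_p$ one has $Tx=\sigma$, where $\sigma=\{\sigma_n\}_{n\ge1}$ with $\sigma_n=\gamma_n^{1/p}\sum_{k=1}^{n}q_ka_k$, and $\sigma\in\ell_p$ since $\|x\|_{\Gamma_p}^p=\|\sigma\|_p^p<\infty$. Writing the norm-convergent $\ell_p$-expansion $\sigma=\sum_{i=1}^{\infty}\sigma_i e_i$ and applying the bounded (indeed isometric) map $T^{-1}$ yields $x=\sum_{i=1}^{\infty}\sigma_i u_i$, with convergence in $\|\cdot\|_{\Gamma_p}$, which is exactly the claimed expansion.

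For part (ii), I would use that $\ell_p$ is separable for $1\le p<\infty$ — the finitely supported sequences with coordinates in $\mathbb{Q}$ (or in $\mathbb{Q}+i\mathbb{Q}$ in the complex case) form a countable dense subset — and that separability passes through the homeomorphism $T^{-1}$; hence $\Gamma_p$ is separable. Alternatively, part (i) shows that $\mathrm{span}\{u_i:i\ge1\}$ is dense in $\Gamma_p$, so the countable set of all finite rational (resp. rational-complex) linear combinations of the $u_i$ is dense, which gives separability directly. If one prefers to avoid the isomorphism altogether, the same telescoping computation shows that the $G$-image of $x-\sum_{i=1}^{N}\sigma_i u_i$ is the tail $(0,\dots,0,\sigma_{N+1},\sigma_{N+2},\dots)$, so $\bigl\|x-\sum_{i=1}^{N}\sigma_i u_i\bigr\|_{\Gamma_p}^{p}=\sum_{n=N+1}^{\infty}|\sigma_n|^{p}\to0$ as $N\to\infty$.

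I do not expect a genuine obstacle. The only step requiring care is the verification $Tu_i=e_i$ (equivalently, the telescoping identity that produces the tail remainder above), after which both statements are formal consequences of the corresponding well-known facts about $\ell_p$; one should also be slightly careful, in (i), to insist that the convergence is in the $\Gamma_p$-norm and not merely coordinatewise, which is precisely what the boundedness of $T^{-1}$ — or the explicit tail estimate — provides.
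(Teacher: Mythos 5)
Your proposal is correct, and both of your routes go through. Your verification that $Tu_i=e_i$ is sound: for the vector $u_i$ supported on coordinates $i$ and $i+1$, the partial sums $\gamma_m^{1/p}\sum_{k=1}^{m}q_k(u_i)_k$ indeed vanish for $m<i$, equal $1$ at $m=i$, and telescope to $0$ for $m>i$. The paper, however, does not package the argument through the isomorphism $T$. For (i) it performs exactly the computation you list as your fallback: it writes out $x-\sum_{i=1}^{n}\big(\gamma_i^{1/p}\sum_{k=1}^{i}q_ka_k\big)u_i$ coordinatewise and observes that its $\Gamma_p$-norm is the tail $\sum_{k=n+1}^{\infty}\gamma_k\big|\sum_{i=1}^{k}q_ia_i\big|^p$, which tends to $0$ since $x\in\Gamma_p$. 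For (ii) the paper constructs an explicit countable set $S$ of finite rational combinations of the $u_i$ and argues density directly, rather than transporting separability of $\ell_p$ through the homeomorphism $T^{-1}$. Your main route is the more structural one: once the earlier Proposition gives the isometric isomorphism $T:\Gamma_p\to\ell_p$ and you check $Tu_i=e_i$, both (i) and (ii) become formal consequences of the Schauder basis property of $\{e_i\}$ and the separability of $\ell_p$, and this cleanly sidesteps the double limit (in $n$ and in the rational approximations $\mu_k$) that the paper's density argument for (ii) has to juggle. What the paper's hands-on computation buys is self-containedness: it exhibits the remainder explicitly and does not lean on the surjectivity of $T$ or on quoted facts about $\ell_p$. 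Your remark that convergence in (i) must be in the $\Gamma_p$-norm rather than merely coordinatewise is exactly the right point of care, and either the boundedness of $T^{-1}$ or the explicit tail identity supplies it.
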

\begin{proof}
(i) It is sufficient to show that the sequence $\Big\{\displaystyle\sum_{i=1}^{n}\Big(\gamma_{i}^{\frac{1}{p}}\displaystyle\sum_{k=1}^{i}q_{k}a_{k}\Big)u_{i}\Big\}_{n\geq 1}$ norm convergent to $x$ for large $n$, and which can be easily deduce from the following computation.
\begin{align*}
\Big\|x-\displaystyle\sum_{i=1}^{n}\Big(\gamma_{i}^{\frac{1}{p}}\displaystyle\sum_{k=1}^{i}q_{k}a_{k}\Big)u_{i}\Big\|_{\Gamma_p}^p &= \Big\|\{0, \ldots, 0, a_{n+1}+\frac{1}{q_{n+1}}\displaystyle\sum_{k=1}^{n}q_{k}a_{k}, a_{n+2}, \ldots \}\Big\|_{\Gamma_p}^p\\
& = \displaystyle\sum_{k=n+1}^{\infty}\gamma_{k} \Big|\displaystyle\sum_{i=1}^{k}q_ia_{i}\Big|^{p}\rightarrow 0 \mbox{~as~} n\rightarrow\infty.
\end{align*}
(ii) We prove that there exists a countable dense subset of $\Gamma_p$. For each $n=1, 2, \ldots$, and fixed sequences $\{q_n\}$, $\{\gamma_n\}$, let us define a set $$S=\Big\{\displaystyle\sum_{k=1}^{n}\mu_k\Big(q_k\sum_{i=k}^{n}\gamma_{i}^{\frac{1}{p}}u_i\Big)=\Big\{\mu_1, \mu_2, \ldots, \mu_n, -\frac{1}{q_{n+1}}\displaystyle\sum_{k=1}^{n}q_{k}\mu_{k}, 0, 0, \ldots\Big\}: ~~\mu_k\in \mathbb{Q} \Big\}.$$ It is immediate that $S$ is countable and a subset of $\Gamma_p$. Also an element $\mu\in S$ can be written as $\mu=\displaystyle\sum_{i=1}^{n}\Big(\gamma_{i}^{\frac{1}{p}}\displaystyle\sum_{k=1}^{i}q_{k}\mu_{k}\Big)u_{i}$. Let $x\in \Gamma_p$ be any element. Since $\overline{\mathbb{Q}}=\mathbb{R}$, so one can have the following:
\begin{align*}
\big\|x-\mu\big\|_{\Gamma_p}^p &= \Big\|\Big\{a_1-\mu_1, a_2-\mu_2, \ldots, a_n-\mu_n, a_{n+1}+\frac{1}{q_{n+1}}\displaystyle\sum_{k=1}^{n}q_{k}a_{k}, a_{n+2}, \ldots \Big\}\Big\|_{\Gamma_p}^p\\
& = \displaystyle\sum_{k=1}^{n}\gamma_{k}q_k|(a_k-\mu_k)|^p+\displaystyle\sum_{k=n+1}^{\infty}\gamma_{k} \Big|\displaystyle\sum_{i=1}^{k}q_ia_{i}\Big|^{p}\rightarrow 0 \mbox{~as~} n\rightarrow\infty.
\end{align*}
Hence $\Gamma_p$ is separable.
\end{proof}


\NI\textit{Acknowledgement:}
The authors are very much thankful to Prof. David Krej\v{c}i\v{r}\'{i}k (Prague, Czech Republic) for his valuable comments on the first draft of our manuscript.

 \end{document}